\theoremstyle{plain}
\newtheorem{thm}{Theorem}[section]
\newtheorem*{thm*}{Theorem}
\newtheorem*{cor*}{Corollary}
\newtheorem*{defn*}{Definition}
\newtheorem{lem}[thm]{Lemma}
\newtheorem*{claim*}{Claim}
\theoremstyle{definition}
\newtheorem{defn}[thm]{Definition}
\newtheorem{rem}[thm]{Remark}
\newtheorem{lm}[thm]{Lemma}
\theoremstyle{remark}
\begin{document}

\title{The affine cones over Fano-Mukai fourfold of genus $7$ are flexible }

\author{Nguyen Thi Anh Hang}
\email{hangnthianh@gmail.com}
\address{Thai Nguyen University of Education, Thai Nguyen City, Viet Nam}

\author{Hoang Le Truong}
\email{hltruong@math.ac.vn\\
	truonghoangle@gmail.com}
\address{Institute of Mathematics, VAST, 18 Hoang Quoc Viet Road, 10307
Hanoi, Viet Nam}

\keywords{
Affine cone, Automorphism, Cylinder,  Fano variety, Flexibility, Group action of the additive group}
\thanks{2020 {\em Mathematics Subject Classification\/}: 14R20, 14J45, 14J50, 14R05.\\
N. T. A. Hang was partially supported by Grant number ICRTM02 2021.04, awarded in
the internal grant competition of the International Center for Research and Postgraduate
Training in Mathematics, Hanoi and IMU Breakout Graduate Fellowship (IMU-BGF-2021-
03). H. L. Truong was partially supported by a fund of VAST under grant number NVCC01.04/22-23.}

\begin{abstract}
In this paper, we will show that the affine cones over any smooth Fano-Mukai fourfold of genus $7$ are flexible.
\end{abstract}

\maketitle

\section{Introduction}

 All considered varieties are algebraic and defined over the complex  field $\Bbb{C}$. 
 
The study of  rational Fano varieties is an active area of research in algebraic geometry, as these varieties have many interesting properties and are related to other areas of mathematics and physics. In this paper, we study rational Fano varieties with flexible affine cones.
The concept of flexibility of affine varieties has been studied in an algebraic context in \cite{AZK12}. The interest in flexible varieties is significant, not only because of their remarkable algebraic and geometric consequences, but also due to the surprising breadth of the class of such varieties (see \cite{Arz23}). Moreover, it has been shown that the flexibility of affine varieties has several useful properties, such as unirationality and has been applied to problems in algebraic geometry such as the Zariski cancellation problem. Additionally, the flexibility of affine varieties is closely related to the existence of a transitive additive group action on the affine cone over a smooth projective variety, as shown in \cite{AZK12, AFKKZ13}.
  
  Now, let $X$ be a smooth projective variety and $H$ an arbitrary ample divisor on $X$, the affine cone over $X$ is an affine variety defined as follows. 
    $${\rm Affcone}_H(X) = {\rm Spec} \bigoplus_{m=0}^\infty H^0(X, \mathcal{O}_X(mH)).$$
 \noindent  
It is known that if the affine cone over a smooth projective variety with Picard rank $1$ is flexible, then the variety is a rational Fano  variety, but it is not clear if the converse is true, whether every rational Fano  variety with Picard rank $1$ has a flexible affine cone.

 In dimension $2$, 
 the flexibility of affine cones over del Pezzo surfaces of degree $4$ is well-known (see \cite{Per13}). In \cite{PaW16}, the authors extended this result to show that for every ample divisor, the affine cones over del Pezzo surfaces of degree $4$ are flexible, and this is also true for affine cones over del Pezzo surfaces of degree at least $5$.
In dimension $3$, it has been shown that the affine cones over certain specific Fano threefolds in Mori-Mukai's classification are flexible (see \cite{MPS18}). In dimension $4$, the flexibility of the affine cone over {\it every} Fano-Mukai fourfold of genus $10$ has been proved in \cite{PrZ20}. More recently, Hoff and the last author showed in \cite{HT22} that the same is true for the affine cones over {\it general} Fano-Mukai fourfolds of genus $7$, $8$, and $9$. Therefore, a natural question arises: are the affine cones over {\it every} Fano-Mukai fourfold of genus $7$, $8$, and $9$ flexible? (see  \cite[Problem 5.11]{HT22}). In this paper, we provide a positive answer to this question for the case of genus $7$ (see Theorem \ref{mainthm}).

\subsection{Notation}
 Let $Y\subset\Bbb P^N$ be a smooth projective variety of dimension $n$ and canonical class $K_Y$. We denote by $H_Y$ the hyperplane class of $Y$ and $d_Y$ the degree of $Y$. The sectional genus $\pi_Y$ of $Y$ is defined as follows. $$\pi_Y=\frac{1}{2}H_Y^{n-1}((n-1)H_Y+K_Y)+1.$$ The tangent bundle and the  $i$-th Chern class of a vector bundle $\mathcal E$ on $Y$ are denoted by $\mathcal{T}_Y$ and $c_i(\mathcal E)$, respectively. Moreover, $c_i(Y) = c_i(\mathcal T_Y)$. The Euler-Poincar\'{e} characteristic $\chi_Y $ of $Y$ is defined by $\chi_Y = \chi(\mathcal O_Y).$ 

The paper is divided into four parts. The main results are presented in Section \ref{main}, where we prove the geometric properties of Fano-Mukai fourfolds of genus $7$. In Section \ref{birational}, we describe the birational maps on these fourfolds. The last section is dedicated to prove the main theorem of the paper.

\section{Main results} \label{main}

In this paper, we study the geometric properties of the intersection of a Fano-Mukai fourfold of genus $7$ denoted as $X$ with the tangent space of $X$ at a point $p$. Previous results from \cite{LM03} and \cite{Kuz18} show that this intersection can take one of the following forms: (i) a set of $5$ lines, (ii) containing a plane or a quadratic surface, or a cubic scroll surface. Our aim is to describe birational maps from $X$ to certain Fano varieties based on these geometric properties. Note that case (i) was already studied in \cite{HT22}, and case (ii) was studied in \cite{HHT22} under the assumption that the surfaces are smooth cubic scroll surfaces, but the other cases are not known. In particular, we have the following theorems.

\begin{thm}\label{pro1}
	Let $ X \subseteq \Bbb P^{9}$ be a Mukai fourfold of genus $7$ with ${\mathrm{Pic}}(X ) \cong \Bbb Z L$ for the hyperplane class $L$. 	Suppose that $X$ contains   a  surface $\Sigma$  as in Table \ref{S95}. Then the following statements hold true.
	\begin{enumerate}[$(1)$]
		\item The linear system $|L - \Sigma|$  passing through $\Sigma$ defines a birational map $\Phi:X \dashrightarrow  Y$.
		
		\item There is a commutative diagram
		$$\xymatrix{&D\ar@{^{(}->}[r]\ar[ld]&\widetilde{X}\ar[dl]_{\varphi}\ar[dr]^{\rho}&E\ar@{_{(}->}[l]\ar[rd]&\\
			\Sigma\ar@{^{(}->}[r]&X\ar@{-->}[rr]^{\Phi}&&Y&F,\ar@{_{(}->}[l]}$$
		where $\varphi$ is the blowup of $X$ centered at the surface $\Sigma$ with exceptional divisor $D$ and $\rho$  is a birational morphism defined by the linear system $|\varphi^\ast L - D|$.
		
		\item The $\rho$-exceptional locus is an irreducible divisor $E\subset \widetilde{X}$.
		\item Let  $H$ be the ample generator of ${\mathrm{Pic}}(Y)$. Then
		$$\begin{aligned}
			\varphi^\ast L \sim \,(d_{\Sigma}+1)\rho^\ast H- E,\quad\quad\quad\quad&\quad\quad D\sim d_{\Sigma}\,\rho^\ast H-E,\\
			\rho^\ast H \sim \varphi^\ast L- D, \quad\quad\quad\quad&\quad\quad E\sim d_{\Sigma}\,\varphi^\ast L-(d_{\Sigma}+1)D.
		\end{aligned}
		$$

		\item The image $\rho(E)$ is  a surface $F\subset Y$ with at most isolated singularities. The birational map $\rho$ is the blowup centered at $F$. Moreover, if $\Sigma$ is smooth then $F$ and $Y$ are smooth. 
		\item $\rho(D)$ is a unique hypersurface of degree $d_{\Sigma}$ containing $F$  and $\varphi(E)$ is a hypersurface of degree $d_{\Sigma}$ of  $X$ containing $\Sigma$.
		
		\item $X\backslash \varphi(E)\cong Y\backslash \rho(D)$.
		
	\end{enumerate}
	
\end{thm}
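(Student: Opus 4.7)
The plan is to realize the entire diagram as a Sarkisov link centered on the surface $\Sigma$, built from a blowup followed by a divisorial contraction. First I would define $\varphi:\widetilde{X}\to X$ to be the blowup of $X$ along $\Sigma$ with exceptional divisor $D$, and show that the proper transform of the linear system $|L-\Sigma|$ equals the complete linear system $|\varphi^\ast L - D|$. The key step here is to verify that $|\varphi^\ast L - D|$ is base-point-free on $\widetilde{X}$. Since $\Sigma$ is one of the special surfaces from Table \ref{S95} on a Fano-Mukai fourfold of genus $7$, the hyperplane sections of $X$ containing $\Sigma$ cut out a sufficiently large linear system; base-point-freeness should follow from a direct analysis of residual intersections, using that $h^0(X,\mathcal{I}_{\Sigma}(L))$ is computed from the short exact sequence $0\to\mathcal{I}_\Sigma(L)\to\mathcal{O}_X(L)\to\mathcal{O}_\Sigma(L|_\Sigma)\to 0$ together with Kodaira vanishing on $X$.

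Once base-point-freeness is established, $|\varphi^\ast L - D|$ defines a morphism $\rho:\widetilde{X}\to Y\subset \Bbb P^N$. I would show $\rho$ is birational onto $Y$ by computing the top self-intersection $(\varphi^\ast L - D)^4$ on $\widetilde{X}$ via the standard blowup formulas, expressing the mixed terms $(\varphi^\ast L)^i\cdot D^{4-i}$ in terms of $L^4=d_X$, the degree $d_\Sigma$, and the Chern invariants of the normal bundle $\mathcal{N}_{\Sigma/X}$. Comparing with $d_Y\cdot\deg(\rho)$ and noting that the linear system separates general points (so $\rho$ is generically finite), one concludes $\deg(\rho)=1$ and extracts the degree $d_Y$. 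With $\rho^\ast H\sim \varphi^\ast L - D$ settled, the other three linear equivalences in item~(4) follow by solving a small linear system: writing $E\sim a\,\varphi^\ast L - b\,D$ and using the condition that $\rho^\ast H\cdot E\cdot (\text{test classes})=0$ (since $E$ is $\rho$-exceptional and contracted to a surface), together with the canonical class formula $K_{\widetilde{X}}=\varphi^\ast K_X + D$, pins down $(a,b)=(d_\Sigma,\,d_\Sigma+1)$.

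To identify the exceptional locus $E$, show it is an irreducible divisor, and verify that $\rho$ is the blowup centered at the surface $F=\rho(E)$, I would apply the Nakano-Fujita contractibility criterion in combination with the linear equivalences of item~(4). The $\rho$-contracted locus consists precisely of curves $C\subset\widetilde{X}$ with $(\varphi^\ast L - D)\cdot C = 0$; the proper transform of the unique degree-$d_\Sigma$ hypersurface of $X$ containing $\Sigma$ (whose existence follows once $\dim|d_\Sigma L - E|$ is computed to be at least $0$) provides the natural candidate for $\varphi(E)$, while the image $\rho(D)$ turns out to be the unique degree-$d_\Sigma$ hypersurface in $Y$ containing $F$. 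Smoothness of $F$ and $Y$ under the hypothesis that $\Sigma$ is smooth is deduced from the fact that $E\to F$ acquires a $\Bbb P^1$-bundle structure coming from the geometry of $\mathcal{N}_{\Sigma/X}$, so that the divisorial contraction $\rho$ is smooth along $F$ by standard results on contractions of smooth varieties along smooth centers. Statements~(6) and~(7) are then immediate: the degree-$d_\Sigma$ hypersurfaces on either side correspond to the classes of $\varphi(E)$ and $\rho(D)$, and the complement identity $X\backslash \varphi(E)\cong Y\backslash\rho(D)$ holds because both sides are canonically identified with $\widetilde{X}\backslash(D\cup E)$.

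The main obstacle I anticipate is establishing base-point-freeness of $|\varphi^\ast L - D|$ and simultaneously controlling the numerical invariants $(a,b)$ across the different surface types in Table \ref{S95}; in practice this probably requires a case-by-case treatment of the planes, quadric surfaces, and cubic scroll surfaces that arise, with the singular cubic scroll case (not covered in \cite{HHT22}) demanding the most care. The remainder of the proof is a relatively standard intersection-theoretic bookkeeping exercise built on the fine structure of Fano-Mukai fourfolds of genus $7$.
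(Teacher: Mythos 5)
Your overall skeleton (blow up $\Sigma$, show $|\varphi^\ast L-D|$ is base point free, obtain a birational morphism $\rho$, work out the divisor classes, blow down to $Y$, and deduce (6)--(7) from $\widetilde X\setminus(D\cup E)$) coincides with the paper's, but the steps where you replace the paper's machinery contain genuine gaps. The paper runs the second half through Mori theory: $\widetilde X$ is a Fano fourfold of Picard rank $2$ (Lemma \ref{fano}), so $|\varphi^\ast L-D|$ defines the second extremal contraction $\rho$ (Lemma \ref{mori}); the Contraction theorem and the intersection-number vanishing give that the exceptional locus is a single irreducible divisor $E$ (Lemma \ref{divisorial}); the results of Andreatta--Mella and Andreatta--Wi\'sniewski give that $\rho$ has only finitely many $2$-dimensional fibers, hence $F=\rho(E)$ is a surface with at most isolated singularities and $\rho$ is the blowup along $F$ (Lemmas \ref{sing.F}, \ref{rhoblowup}). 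In your plan none of these points is actually secured: you never prove the $\rho$-contracted locus is one irreducible divisor (item (3)); your mechanism for item (5) --- that ``$E\to F$ acquires a $\mathbb P^1$-bundle structure coming from the geometry of $\mathcal N_{\Sigma/X}$'' --- is misdirected, since $\mathcal N_{\Sigma/X}$ governs the bundle $D\to\Sigma$, not $E\to F$; the fibration structure of $E$ over $F$ (the family of curves contracted by $\rho$) is exactly what must be established before a Fujiki--Nakano-type contractibility criterion can be applied, and this is what the extremal-contraction theorems (Ando, Andreatta--Wi\'sniewski, Andreatta--Mella) deliver in the paper. Moreover the assertion that $F$ has at most isolated singularities for a possibly singular $\Sigma$ is not addressed by your smooth-case argument at all.

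A second concrete gap is the identification of $Y$: the theorem, through Table \ref{S95} and items (4) and (6), requires $Y$ to be $X_{2\cdot2}\subset\mathbb P^6$, $Q^4\subset\mathbb P^5$ or $\mathbb P^4$, with $\mathrm{Pic}(Y)=\mathbb Z H$. Extracting $d_Y$ from $(\varphi^\ast L-D)^4$ does not suffice (a fourfold of degree $4$ in $\mathbb P^6$ is not a priori a smooth intersection of two quadrics), and you give no argument that $\mathrm{Pic}(Y)$ has rank one; the paper obtains both from the contraction structure together with $-K_{\widetilde X}=(d_\Sigma+2)\rho^\ast H-E$ and the classification of Fano fourfolds of index $\ge 2$ (Lemma \ref{var.Y}). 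Relatedly, your determination of $(a,b)=(d_\Sigma,d_\Sigma+1)$ is underdetermined as stated: the vanishing $(\rho^\ast H)^3\cdot E=0$ fixes only the ratio $a:b$, and the formula $K_{\widetilde X}=\varphi^\ast K_X+D$ does not involve $E$; fixing the scale needs $K_{\widetilde X}=\rho^\ast K_Y+E$ (or an equivalent statement), i.e.\ it already presupposes that $\rho$ is a blowup of a surface in $Y$ and knowledge of the index of $Y$ --- precisely the structural facts missing above. The remaining parts of your plan (base point freeness via the hyperplanes through $\langle\Sigma\rangle$, the intersection-theoretic bookkeeping, and the deductions (6)--(7)) are consistent with the paper's Lemmas \ref{fano}, \ref{internumber}, \ref{relation} and the proof of Theorem \ref{pro1}.
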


\begin{table}[h!]
	\centering
	\begin{tabular}{ |r| c| c|| c|c|c|c|c|c|}
		\hline
		$\Sigma\subset X$&$d_{\Sigma}$ &  $\pi_\Sigma$ &$Y$&$d_F$&$\pi_F$&$(\varphi^\ast L)^2\cdot D^2$&$(\varphi^\ast L)\cdot D^3$&$D^4$\\ 
		\hline
		plane &$1$   & $0$   &$X_{2\cdot2}$&$5$ & $1$ &$-1$ &$1$ &$2$ 	\\ 
		\hline
		quadric surface &$2$   & $0$   &$Q^4\subseteq\Bbb P^5$&$8$ & $4$ &$-2$ &$0$ &$2$ 	\\ 
		\hline
		cubic scroll surface &$3$   & $0$   &$\Bbb P^4$&$7$ & $3$ &$-3$ &$-1$ &$3$ 	\\ 
		\hline
		
	\end{tabular}
	\caption{ Surfaces $\Sigma$ contained in a Mukai fourfold $X$ of genus $7$ and surfaces $F\subset Y$.}
	\label{S95}
\end{table}

The inverse picture of  the Theorem \ref{pro1} would be the inverse of the birational map $\Phi$, describing a birational map from the Fano variety back to the Fano-Mukai fourfold of genus $7$. The exact details of the inverse picture would be provided in the following theorem.

\begin{thm}\label{invpro}
	Let $F$ and $Y$ be the varieties  as in Table \ref{S95}. Then the following statements hold true.
	\begin{enumerate}[$(1)$]
		\item The linear system $|(i-1)H - F|$ passing through $F$ defines a birational map $\Phi:Y \dashrightarrow X \subset \Bbb P^{9}$, where $X =\Phi(Y)$ is a Mukai fourfold of genus $7$ with ${\mathrm{Pic}}(X ) \cong \Bbb Z L$ for an ample generator $L$, where  $i$ is  the index of $Y$.
		
		\item There is a commutative diagram
		$$\xymatrix{&E\ar@{^{(}->}[r]\ar[ld]&\widetilde{Y}\ar[dl]_{\rho}\ar[dr]^{\varphi}&D\ar@{_{(}->}[l]\ar[rd]&\\
			F\ar@{^{(}->}[r]&Y\ar@{-->}[rr]^{\Phi}&&X&\Sigma,\ar@{_{(}->}[l]}$$
		where $\rho$ is the blowup of $Y$ centered at $F$ with exceptional divisor $E$  and  $\varphi$  is a birational morphism defined by the linear system $|(i-1)\rho^\ast H - E|$ with $\varphi$-exceptional divisor $D$.

		\item $\rho(D)$  is a singular cubic threefold. The singularities of $\Sigma$ are the $\Phi$-images of planes in $\rho(D)$ meeting $F$ along a quartic curve. The birational map $\varphi$ is the blowup of a surface $\Sigma$ as in Table \ref{S95}. Moreover, with a general  surface $F$, the varieties $\Sigma$ and $X$ are smooth.
		
		\item We have the following relations 
		$$\begin{aligned}
			\varphi^\ast L \sim (i-1)\rho^\ast H- E,\quad\quad\quad\quad&\quad\quad D\sim (i-2)\rho^\ast H-E,\\
			\rho^\ast H \sim \varphi^\ast L- D, \quad\quad\quad\quad&\quad\quad E\sim (i-2)\varphi^\ast L-(i-1)D.
		\end{aligned}
		$$
	\end{enumerate}
		
\end{thm}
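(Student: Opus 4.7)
The plan is to reverse the construction of Theorem \ref{pro1}. Given $(F, Y)$ as in Table \ref{S95}, I would first perform the blowup $\rho: \widetilde Y \to Y$ along $F$, with exceptional divisor $E$, and analyze the linear system $\mathcal L := |(i-1)\rho^\ast H - E|$ on $\widetilde Y$. To prove (1), I would show that $F$ is scheme-theoretically cut out by $|(i-1)H|$ on $Y$ (so that $\mathcal L$ is base-point free) and that $\dim \mathcal L = 9$; both should follow from the invariants recorded in Table \ref{S95} via Riemann--Roch on $Y$ combined with the ideal sequence
\[
0 \to \mathcal I_F((i-1)H) \to \mathcal O_Y((i-1)H) \to \mathcal O_F((i-1)H) \to 0
\]
and standard cohomological vanishing on $Y$ and $F$. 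The linear system then defines a morphism $\varphi: \widetilde Y \to X \subset \mathbb P^9$, and birationality of $\Phi$ would follow once $\varphi$ is shown to contract exactly one prime divisor $D$ onto a surface $\Sigma \subset X$.

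For parts (2) and (4), once the relation $\varphi^\ast L \sim (i-1)\rho^\ast H - E$ is in hand, the remaining identities are formal linear algebra inside the rank-two sublattice of $\operatorname{Pic}(\widetilde Y)$ spanned by $\rho^\ast H$ and $E$ (equivalently, by $\varphi^\ast L$ and $D$), provided $D \sim (i-2)\rho^\ast H - E$ is identified. I would pin this class down by comparing intersection numbers: the values $(\varphi^\ast L)^a \cdot D^b$ forced by $X$ having degree $12$ in $\mathbb P^9$ coincide, under the symmetry between Theorem \ref{pro1} and Theorem \ref{invpro}, with the entries in the last three columns of Table \ref{S95}, and this determines $D$ up to numerical equivalence. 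That $X$ is a Mukai fourfold of genus $7$ then follows from computing $L^4 = 12$ and verifying $-K_X \sim 2L$ by push-forward through $\varphi$ and the projection formula, together with the $\operatorname{Pic}(X) \cong \mathbb Z L$ assertion inferred from the contraction structure.

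For part (3), the geometric heart of the statement, I would analyze $\rho(D) \subset Y$ case by case. In the case $Y = \mathbb P^4$ with $F$ of degree $7$ and sectional genus $3$, the class $D \sim 3\rho^\ast H - E$ pushes forward to a cubic hypersurface containing $F$; a Riemann--Roch count should give $h^0(\mathcal I_F(3)) = 1$, forcing $\rho(D)$ to be the unique such cubic. Its singular points should correspond precisely to $2$-planes $P \subset \rho(D)$ meeting $F$ along a plane quartic curve: along each such plane the fiber of $\varphi|_D$ collapses to a point, producing an isolated singularity of $\Sigma = \varphi(D)$, while for generic $F$ no such plane exists, giving smoothness of $\Sigma$ and hence of $X$ by the inverse of Theorem \ref{pro1}(5). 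Parallel arguments with appropriately adjusted degrees handle the $Q^4$ and $X_{2\cdot 2}$ cases. The main obstacle is precisely this geometric analysis: establishing the bijection between singular points of $\Sigma$ and special planes inside $\rho(D)$ (via a local description of $\varphi|_D$), and confirming through a Hilbert-scheme argument that such planes are absent for a generic choice of $F$.
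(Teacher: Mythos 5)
Your overall blueprint (blow up $F$, show $|(i-1)\rho^\ast H-E|$ is base point free of dimension $9$, identify $D$ as the proper transform of the unique degree-$(i-2)$ hypersurface through $F$, deduce the divisor relations by linear algebra, and relate singularities of $\Sigma$ to planes meeting $F$) matches the paper's, but the engine that makes the paper's proof work is Mori theory, and that is exactly what your sketch omits. The paper first shows $\widetilde Y$ is Fano of Picard rank $2$ (Lemma \ref{fano2}), so that $\rho$ and the map $\mu$ given by $|(i-1)\rho^\ast H-E|$ are the two extremal contractions (Lemma \ref{mori2}); birationality of $\mu$ is then automatic from $((i-1)\rho^\ast H-E)^4=12>0$ because an extremal contraction has connected fibers. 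Your claim that ``birationality of $\Phi$ would follow once $\varphi$ is shown to contract exactly one prime divisor onto a surface'' is not a valid implication as stated: a generically finite morphism of degree $>1$ can also contract a single divisor, so without the contraction framework (or a separate degree computation) you have not ruled out that the map is $2{:}1$ onto a fourfold of smaller degree. The paper also needs \cite[Lemma 2.5]{HHT22} and a Stein factorization argument (Remarks \ref{rem3}, \ref{rem4}) to identify the image in $\Bbb P^9$ with the anticanonically embedded Mukai fourfold; this step is absent from your plan. Finally, the coefficient $k$ in $D\sim (i-2)\rho^\ast H-kE$ is pinned down in the paper by nefness, $0\le ((i-1)\rho^\ast H-E)^3\cdot D=-12d_\Sigma(k-1)$, which is cleaner and non-circular compared with your proposal to match intersection numbers ``forced by $X$ having degree $12$'' (degree $12$ is something you still have to prove).

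The most serious gap is in part (3), which you yourself flag as ``the main obstacle'' and for which you only gesture at ``a local description of $\varphi|_D$'' and ``a Hilbert-scheme argument.'' The paper resolves precisely this point with two classification results for $K$-negative contractions of smooth fourfolds: Andreatta--Wi\'sniewski forces any $2$-dimensional fiber $\widetilde Z$ of $\varphi$ to be $\Bbb P^2$ with $-K_{\widetilde Y}|_{\widetilde Z}=\mathcal O_{\Bbb P^2}(1)$, whence $\rho^\ast H|_{\widetilde Z}=\mathcal O(1)$, $E|_{\widetilde Z}=\mathcal O(i-1)$, so $\rho(\widetilde Z)$ is a plane inside $\rho(D)$ meeting $F$ in a curve of degree $i-1$ --- excluded for general $F$; then Ando's theorem says a divisorial $K$-negative contraction with fibers of dimension $\le 1$ is the blowup of a \emph{smooth} surface, giving simultaneously that $\varphi$ is a blowup, that $\Sigma$ is the surface in Table \ref{S95}, and that $X$ is smooth. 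Without invoking (or reproving) these structure theorems, your plan does not explain why $\varphi$ is a blowup at all rather than just a birational contraction with possibly worse structure, nor why smoothness of $X$ follows; ``the inverse of Theorem \ref{pro1}(5)'' cannot be used here, since Theorem \ref{pro1} presupposes the smooth fourfold $X$ you are trying to construct. So the proposal, as written, has a genuine gap at the heart of statement (3).
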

Using the above theorems, we give an affirmative answer to the question in \cite[Problem 5.11]{HT22} in the case of genus $7$. In particular, we have the following theorem.
\begin{thm}\label{mainthm}
	The affine cone over any smooth Fano-Mukai fourfold of genus $7$ is flexible.
\end{thm}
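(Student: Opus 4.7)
The plan is to apply the Kishimoto--Prokhorov--Zaidenberg criterion (in the form used in \cite{MPS18,PrZ20,HT22,HHT22}), which says that if a smooth Fano variety $X$ with $\mathrm{Pic}(X)\cong\mathbb Z L$ carries a covering family of $L$-polar cylinders whose associated $\mathbb G_a$-actions generate a transitive action on $\mathrm{Affcone}_L(X)\setminus\{0\}$, then $\mathrm{Affcone}_L(X)$ is flexible. The first step is to reduce to the special cases addressed by this paper: by the classification of \cite{LM03,Kuz18}, a tangent hyperplane section $T_pX\cap X$ either consists of $5$ lines (handled in \cite{HT22}) or contains one of the distinguished surfaces $\Sigma$ of Table \ref{S95} (plane, quadric, or cubic scroll). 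It therefore suffices to produce a covering family of $L$-polar cylinders on $X$ under the assumption that $X$ contains such a $\Sigma$.

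Given such a $\Sigma$, I would invoke Theorem \ref{pro1}(7) to obtain the isomorphism
\[
X\setminus \varphi(E)\;\cong\;Y\setminus\rho(D),
\]
where $Y\in\{X_{2\cdot2},\,Q^4,\,\mathbb P^4\}$ and $\rho(D)$ is the unique hypersurface of degree $d_\Sigma$ in $Y$ containing $F$. On the right-hand side $Y$ is a classical rational Fano variety, and in each of the three cases one can construct an open cylinder $V\cong Z\times\mathbb A^1$ inside $Y\setminus\rho(D)$: for $Y=\mathbb P^4$ via the pencil of hyperplanes through a suitable linear subspace of $\rho(D)$, for $Y=Q^4$ via a linear projection from a point lying on $\rho(D)$, and for $Y=X_{2\cdot2}$ via the conic bundle structure coming from the pencil of quadrics cutting out $Y$. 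Pulling back through the isomorphism gives an open cylinder $U_\Sigma\subset X$ whose complement is supported on $\varphi(E)$ together with the pullback of $Y\setminus V$; since $\mathrm{Pic}(X)=\mathbb Z L$ every such divisor is a multiple of $L$, so $U_\Sigma$ is automatically $L$-polar.

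Next, let $\Sigma$ vary in the appropriate component of the Hilbert scheme of $X$ (using Theorem \ref{invpro} to guarantee that a general deformation $\Sigma'$ is still smooth and that the construction of Theorem \ref{pro1} goes through verbatim). This yields a family $\{U_{\Sigma'}\}$ of $L$-polar cylinders. I would check that this family is covering by a dimension count: the hypersurfaces $\varphi(E_{\Sigma'})$ form a positive-dimensional family of degree-$d_\Sigma$ divisors on $X$, and since no effective divisor on a Picard-rank-one Fano can be contained in every member of such a family, the union $\bigcup_{\Sigma'}U_{\Sigma'}$ is a dense open, and a further deformation argument promotes this to a full covering. Finally, because the family of cylinders sweeps out tangent directions in more than one way (from varying both $\Sigma$ and the internal $\mathbb A^1$-structure of $V$ on $Y$), the associated $\mathbb G_a$-subgroups of $\mathrm{Aut}(\mathrm{Affcone}_L(X))$ generate a subgroup acting transitively on smooth points, yielding flexibility.

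The main obstacle I expect is Step~3: verifying that the family $\{U_{\Sigma'}\}$ is genuinely covering and that the resulting $\mathbb G_a$-actions are transversal in the sense required by the criterion. The covering property is not automatic because the cylinders $U_{\Sigma'}$ all miss $\varphi(E_{\Sigma'})$, and one must argue that no point of $X$ lies in $\varphi(E_{\Sigma'})$ for every admissible deformation $\Sigma'$; this will require a careful analysis of the Hilbert scheme of the surfaces $\Sigma$ on $X$ and of the hypersurfaces $\varphi(E)$ (using Theorem \ref{pro1}(6), which identifies them as the unique degree-$d_\Sigma$ hypersurfaces through $\Sigma$), combined with the rigidity of a general hypersurface in a family through a point.
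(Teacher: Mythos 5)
Your overall skeleton (pass to the simpler Fano $Y$ via Theorem \ref{pro1}(7), produce $L$-polar cylinders, invoke a Kishimoto--Prokhorov--Zaidenberg/Perepechko-type criterion) points in the right direction, but the two steps that actually carry the proof --- the covering of \emph{all} of $X$ and the transversality --- are exactly where your argument has genuine gaps, and they are handled quite differently in the paper. For the covering, you propose to move $\Sigma$ in its Hilbert scheme and conclude by a dimension count plus ``a further deformation argument.'' Nothing guarantees that the surfaces of Table \ref{S95} move in a positive-dimensional family \emph{inside a fixed} $X$ (their Hilbert scheme in $X$ may well be finite), and even a dense open union would not suffice: the entire point of upgrading the result of \cite{HT22} from general to arbitrary smooth $X$ is to reach the special points. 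You also treat the dichotomy ``five lines versus distinguished surface'' as a global alternative for $X$, whereas it is a point-by-point statement about $T_xX\cap X$; on a fixed $X$ both configurations occur at different points, so you cannot reduce to ``$X$ contains $\Sigma$'' and then ignore the remaining points. The paper's proof is built precisely around this: for \emph{every} $x\in X$ it uses the tangent-space geometry at $x$ to produce a cubic hypersurface section $C_x$ with $x\in C_x$ and an isomorphism $X\setminus C_x\cong \Bbb P^4\setminus W_x$ with $W_x$ a \emph{singular cubic} threefold (Lemma \ref{openCover}); in the plane and quadric cases this requires composing Theorem \ref{pro1} with the further complement isomorphisms for $X_{2\cdot2}$ and $Q^4$ so as to land in $\Bbb P^4$ minus a cubic. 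Since $\bigcap_{x\in X}C_x=\emptyset$, the complements $U_x$ cover $X$ --- this is the covering argument your proposal is missing.

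For transversality, asserting that the $\Bbb G_a$-actions ``sweep out tangent directions in more than one way'' is not a verification of Perepechko's condition that the covering admits no proper invariant subset; you yourself flag this as the main obstacle, and no mechanism is offered to close it. The paper obtains transversality inside each chart from Lemma \ref{cylinderCremona} (imported from \cite{HT22}): $\Bbb P^4$ minus a singular cubic is \emph{transversally} covered by $\Bbb A^2$-cylinders, and Lemma \ref{flexibilityCriterion} converts a transversal $\Bbb A^2$-cylinder covering into a transversal $\Bbb A^1$-cylinder covering, after which Theorem \ref{flexibilityCriterionPer} applies. This is why reducing every case to $\Bbb P^4$ minus a singular cubic (rather than stopping at $Y\setminus\rho(D)$) is essential. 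Your ad hoc cylinders on $Y\setminus\rho(D)$ are also not established as open embeddings of $Z\times\Bbb A^1$: the ``pencil of hyperplanes'' in $\Bbb P^4$ and the projection for $Q^4$ are unverified sketches, and the ``conic bundle structure coming from the pencil of quadrics cutting out $X_{2\cdot2}$'' does not make sense as stated. To repair the proposal you would need to replace the Hilbert-scheme/deformation step by the point-wise construction of Lemma \ref{openCover} and replace the heuristic transversality claim by the cited transversal covering of $\Bbb P^4$ minus a singular cubic.
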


\begin{rem}\rm	
To establish the above results, we examine the intersection of a Fano-Mukai fourfold $X$ of genus $7$ with its tangent space at every point on $X$, along with the tangent cone at each point. While in \cite{HT22}, M. Hoff and the last author investigated a scenario where the intersection occurs at a general point on a Fano-Mukai fourfold $X$ of genus $7$, we concentrate on the remaining cases in this paper.  
The proof utilizes criteria for the flexibility of affine cones from \cite[Theorem 5]{Per13} and \cite[Lemma 5.7]{HT22}, which are based on certain open coverings of the projective variety. Starting with a Fano-Mukai fourfold $X$ of genus $7$, we demonstrate the existence of an open subset $U$ of $X$ that can be covered by $\mathbb{A}^2$-cylinders. We then establish the transversality of this covering using the aforementioned flexibility criteria.

\end{rem}


\section{Birational maps on Fano-Mukai fourfolds of genus $7$} \label{birational}

\subsection{Birational maps from Fano-Mukai fourfolds of genus $7$}
In this section, we will prove Theorem \ref{pro1}. Assume that $X$ is a Fano-Mukai fourfold of genus $7$ with  the hyperplane class $L$ containing a surface $ \Sigma$ as describe in Table \ref{S95}. We denote by $\varphi: \widetilde{X} \longrightarrow X $ the blow up of $X$ centered at the surface $\Sigma$ with exceptional divisor $D$. 
\begin{lm}{\it \label{fano} The variety $\widetilde{X}$ is a Fano fourfold with Picard rank $2$.}
\end{lm}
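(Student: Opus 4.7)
First I would establish the Picard rank and then ampleness of $-K_{\widetilde{X}}$. For the Picard rank, apply the standard blowup formula: since $\varphi$ is the blowup of the smooth fourfold $X$ along a codimension-$2$ smooth surface $\Sigma$ (smooth in each case of Table \ref{S95}), we have $\mathrm{Pic}(\widetilde{X}) = \varphi^*\mathrm{Pic}(X) \oplus \mathbb{Z}[D]$, and since $\mathrm{Pic}(X) = \mathbb{Z}[L]$ this immediately yields rank $2$ with $\mathbb{Z}$-basis $\{\varphi^*L, D\}$.

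For the Fano property, use the codimension-$2$ blowup canonical class formula $K_{\widetilde{X}} = \varphi^*K_X + D$. Since $X$ is a Mukai fourfold of genus $7$, its index equals $\dim X - 2 = 2$, so $-K_X \sim 2L$ and
$$-K_{\widetilde{X}} \sim 2\varphi^*L - D.$$
To check ampleness I would apply Kleiman's criterion: because $\mathrm{Pic}(\widetilde{X})$ has rank $2$, the closed cone $\overline{\mathrm{NE}}(\widetilde{X})$ is a strongly convex $2$-dimensional cone, and it suffices to check that $-K_{\widetilde{X}}$ is positive on each of its two extremal rays. One extremal ray is generated by a fiber $f$ of the $\mathbb{P}^1$-bundle $\varphi|_D : D \to \Sigma$: using $\varphi^*L \cdot f = 0$ and $D \cdot f = -1$ we obtain $(-K_{\widetilde{X}}) \cdot f = 1 > 0$. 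The other extremal ray should be represented by the strict transform $\tilde{\ell}$ of a minimal rational curve $\ell$ on $X$ (a line, since $X$ has index $2$) chosen so that $\ell$ is disjoint from $\Sigma$, or at most meets $\Sigma$ in a single point; for such $\ell$ we have $\varphi^*L \cdot \tilde{\ell} = 1$ and $D \cdot \tilde{\ell} \leq 1$, hence $(-K_{\widetilde{X}}) \cdot \tilde{\ell} \geq 1 > 0$.

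The main obstacle is to rigorously identify the second extremal ray and rule out curve classes $C \subset \widetilde{X}$ with $2\varphi^*L \cdot C \leq D \cdot C$. For $\Sigma$ a plane ($d_\Sigma = 1$) this is painless: any line in $X$ meeting $\Sigma$ in two points lies in the projective span $\mathbb{P}^2 = \Sigma$, hence in the blowup center, and therefore contributes no curve class on $\widetilde{X}$. For $\Sigma$ a quadric ($d_\Sigma = 2$) or a cubic scroll ($d_\Sigma = 3$) one must instead rule out secant lines of $\Sigma$ lying on $X$; I would do this via the scheme-theoretic equality $X \cap \mathrm{span}(\Sigma) = \Sigma$, which follows from the high codimension of $X \cap \mathrm{span}(\Sigma)$ in $\mathbb{P}^9$ and an elementary degree count, coupled with a normal-bundle computation to bound intersections of other candidate minimal curves with $\Sigma$. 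Once positivity is verified on both rays, Kleiman's criterion concludes that $-K_{\widetilde{X}}$ is ample and hence $\widetilde{X}$ is Fano.
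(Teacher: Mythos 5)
Your Picard rank computation is the same as the paper's. For the Fano property, however, you take a genuinely different route -- a direct analysis of the two extremal rays of $\overline{\mathrm{NE}}(\widetilde{X})$ -- and the step you yourself flag as ``the main obstacle'' is a genuine gap that your sketch does not close. The difficulty is not only secant lines: to verify Kleiman's criterion you must bound $D\cdot C$ against $\varphi^\ast L\cdot C$ for \emph{every} irreducible curve $C\subset\widetilde{X}$ not contracted by $\varphi$, since a curve of degree $d$ on $X$ meeting $\Sigma$ with total multiplicity $\ge 2d$ would make $-K_{\widetilde{X}}\cdot \widetilde{C}\le 0$; ruling out lines with $D\cdot\widetilde{\ell}\ge 2$ says nothing about conics meeting $\Sigma$ in five points, and so on. Moreover, your identification of the second extremal ray with the strict transform of a line is unsubstantiated: the Cone Theorem only guarantees rational-curve generators for the $K$-negative part of the cone, so using it to locate the second ray presupposes the ampleness of $-K_{\widetilde{X}}$ that you are trying to prove, and the possible $K$-non-negative part of $\overline{\mathrm{NE}}(\widetilde{X})$ is never excluded by your argument.

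The paper sidesteps all of this with one geometric input: the linear projection from the $(7-d_{\Sigma})$-plane $\langle\Sigma\rangle$ becomes a morphism after blowing up $\Sigma$ (because $X\cap\langle\Sigma\rangle=\Sigma$ scheme-theoretically), so the linear system $|\varphi^\ast L-D|$ is base point free, hence $\varphi^\ast L-D$ is nef. Then $-K_{\widetilde{X}}=\varphi^\ast L+(\varphi^\ast L-D)$ is the sum of an ample and a nef divisor and is therefore ample by Kleiman's criterion -- no identification of extremal rays is needed. Note that the nefness of $\varphi^\ast L-D$ is exactly the uniform bound $D\cdot C\le\varphi^\ast L\cdot C$ that your approach is missing; if you want to salvage your argument, proving that single inequality (equivalently, the base-point-freeness above) is the efficient way to do it, after which your two test curves do confirm positivity on the whole cone.
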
 

\begin{proof} 
 Since  ${\rm Pic}\,(X) = \mathbb{Z}L$ and $\varphi $ is the blowup of $X$ centered at $\Sigma$, we have $\textrm{rank }  {\rm Pic}\,(\widetilde{X})=2$ and
$$-K_{\widetilde{X}} = \varphi^\ast L+\varphi^\ast L - D.$$  Because the projection from the $(7-d_{\Sigma})$-plane  $\langle\Sigma\rangle$ induces a birational map $\Phi:X \dashrightarrow  Y$, the linear system $|\varphi^\ast L - D|$ is base point free. Thus $\varphi^\ast L - D$ is nef. Since $ \varphi^\ast L$ is ample, by the Kleiman's criterion for ampleness, their sum is an ample divisor. Hence $\widetilde{X}$ is a Fano fourfold with Picard rank $2$, as required. 
\end{proof}

\begin{lm}{\it \label{mori} There exist two different Mori contractions on $\widetilde{X}.$}
\end{lm}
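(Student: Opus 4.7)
The plan is to apply Mori theory to $\widetilde X$. By Lemma~\ref{fano}, $\widetilde X$ is a smooth Fano fourfold with $\mathrm{Pic}(\widetilde X)$ of rank $2$, so the cone theorem asserts that $\overline{\mathrm{NE}}(\widetilde X)$ is a closed polyhedral cone generated by exactly two extremal rays, $R_{1}$ and $R_{2}$, and the contraction theorem attaches to each of them a Mori contraction $\mathrm{cont}_{R_{i}}\colon\widetilde X\to Z_{i}$. The task thus reduces to exhibiting two distinct morphisms from $\widetilde X$ of this type.

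The first is the blowup $\varphi\colon\widetilde X\to X$ itself. Since $\Sigma$ is irreducible and $\varphi$ has connected one-dimensional fibers, all of which are numerically proportional, the fiber class spans a single extremal ray, which I call $R_{1}$, and $\varphi=\mathrm{cont}_{R_{1}}$. For the second, I take the morphism $\rho\colon\widetilde X\to Y'$ defined by the linear system $|\varphi^{\ast}L-D|$. Base-point freeness of this system was already established in the proof of Lemma~\ref{fano}: it is obtained by pulling back through $\varphi$ the system of hyperplanes in $\mathbb{P}^{9}$ containing the linear span $\langle\Sigma\rangle$, which cuts out the birational projection $X\dashrightarrow Y'$. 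Because $\varphi^{\ast}L-D$ and $\varphi^{\ast}L$ are linearly independent in $N^{1}(\widetilde X)_{\mathbb{Q}}$, the morphism $\rho$ cannot factor through $\varphi$, so it is not an isomorphism, and the nef class $\varphi^{\ast}L-D$ is therefore not ample. Kleiman's criterion then furnishes a non-zero effective $1$-cycle class $[C]$ with $(\varphi^{\ast}L-D)\cdot[C]=0$, and I claim this class cannot lie on $R_{1}$: on a $\varphi$-fiber $\varphi^{\ast}L$ restricts to zero while $-D$ restricts to the $\varphi$-relatively ample tautological class of the exceptional divisor, so $\varphi^{\ast}L-D$ has strictly positive degree on curves in $R_{1}$. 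Consequently $[C]\in R_{2}$, and after passing to the Stein factorization the morphism $\rho$ coincides with $\mathrm{cont}_{R_{2}}$. In particular $\rho\neq\varphi$, giving the desired second Mori contraction.

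The main technical point is to confirm the positivity claim that $-D$ is $\varphi$-relatively ample, so that contracted $\varphi$-fibers pair positively with $\varphi^{\ast}L-D$; this is the standard positivity of the relative $\mathcal{O}(1)$ of a blowup along a subvariety of codimension at least two, and it is the only ingredient that could fail were $\Sigma$ sufficiently singular. Once it is in hand, the linear-algebraic structure of the rank-$2$ lattice $N^{1}(\widetilde X)_{\mathbb{Q}}$ forces $R_{1}$ and $R_{2}$ to be distinct rays and the two contractions to be genuinely different, completing the argument.
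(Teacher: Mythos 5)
Your overall framing is sound, and in fact your first paragraph alone already proves the statement as literally written: since $\widetilde X$ is a Fano fourfold with $\operatorname{rank}\operatorname{Pic}(\widetilde X)=2$ (Lemma \ref{fano}), the Cone theorem gives exactly two distinct $K$-negative extremal rays, each of which is contractible, and the two contractions differ because they contract curves on different rays. This matches the paper's intent; the extra content, both in your write-up and in the paper, is the identification of the second contraction with the morphism $\rho$ defined by $|\varphi^\ast L-D|$, which is what the subsequent lemmas actually use.

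The genuine gap is in your justification that $\varphi^\ast L-D$ is not ample. The chain ``$\varphi^\ast L-D$ and $\varphi^\ast L$ are linearly independent in $N^1$, hence $\rho$ does not factor through $\varphi$, hence $\rho$ is not an isomorphism, hence the class is not ample'' fails at every step: on a Fano of Picard rank $2$ the ample cone is an open two-dimensional cone, so two linearly independent nef classes can perfectly well both be ample; and even if $\rho$ were not an embedding this would not contradict ampleness, since the morphism attached to the complete linear system of an ample but not very ample divisor need only be finite. What has to be excluded is precisely that $\rho$ is finite, i.e.\ one needs a nonzero class in $\overline{\mathrm{NE}}(\widetilde X)$ on which $\varphi^\ast L-D$ vanishes; your (correct) observation that $\varphi^\ast L-D$ is positive on $\varphi$-fibers only shows that such a class cannot lie on the $\varphi$-ray. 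No purely formal argument can produce it, because $(\varphi^\ast L-D)^4>0$ in all three cases of Table \ref{S95}; the paper supplies the missing input numerically in Lemma \ref{internumber}(2), namely $(\varphi^\ast L-D)^3\cdot\bigl(d_\Sigma\varphi^\ast L-(d_\Sigma+1)D\bigr)=0$ with the second class effective, using the intersection numbers of Table \ref{S95}. Once non-ampleness is granted (either by that computation or by exhibiting an explicit curve contracted by the projection from $\langle\Sigma\rangle$), your Stein-factorization argument identifying $\rho$ with the contraction of the second ray goes through.
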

\begin{proof} 
	By the Cone theorem, the two nef linear systems $|\varphi^\ast L - D| $ and $|\varphi^\ast L|$ define two Mori contractions on $\widetilde{X}.$ The first one is defined by $|\varphi^\ast L|$ which is exactly $\varphi: \widetilde{X} \longrightarrow X.$ The second one is $\rho: \widetilde{X} \longrightarrow U$  defined by $|\varphi^\ast L - D| $ that is different from $\varphi.$ 
\end{proof}

\begin{lem}\label{internumber}\label{rem1} We have
	\begin{enumerate}[$(1)$]
	 \item  $(\varphi^\ast L)^4=12,$ $(\varphi^\ast L)^3\cdot D=0,$ and $(\varphi^\ast L)^2\cdot D^2$, $(\varphi^\ast L)\cdot D^3$ and $D^4$  are as in Table \ref{S95}. 
	 \item The divisor class of $\varphi^\ast L-D$ is not ample.	
\end{enumerate}
\end{lem}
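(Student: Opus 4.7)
My plan for (1) is to apply the standard intersection calculus for the blowup. Writing $N=N_{\Sigma/X}$ for the rank-$2$ normal bundle (assuming $\Sigma$ is smooth), $\pi\colon D\cong \Bbb P(N)\to\Sigma$ for the projection, and $\xi=c_1(\mathcal{O}_{\Bbb P(N)}(1))$, I will use the identity $D|_D=-\xi$, together with the push-forward relations $\pi_\ast\xi=1$, $\pi_\ast\xi^2=-c_1(N)$ and $\pi_\ast\xi^3=c_1(N)^2-c_2(N)$. The first two equalities in part $(1)$ are then immediate: $(\varphi^\ast L)^4=L^4=2g-2=12$ by the degree formula for the Fano-Mukai fourfold of genus $g=7$, and $(\varphi^\ast L)^3\cdot D=L^3\cdot\varphi_\ast D=0$ by the projection formula, since $\varphi(D)=\Sigma$ has dimension $2<3$.

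For the remaining three numbers, restricting to $D$ and pushing down to $\Sigma$ gives
\[
(\varphi^\ast L)^2\!\cdot\! D^2=-(L|_\Sigma)^2=-d_\Sigma,\quad (\varphi^\ast L)\!\cdot\! D^3=-L|_\Sigma\!\cdot\! c_1(N),\quad D^4=\int_\Sigma\bigl(c_2(N)-c_1(N)^2\bigr).
\]
Combining the normal-tangent sequence with $K_X=-2L$ yields $c_1(N)=K_\Sigma+2L|_\Sigma$, and the sectional-genus formula $K_\Sigma\cdot L|_\Sigma=2\pi_\Sigma-2-d_\Sigma$ then gives $(\varphi^\ast L)\cdot D^3=2-2\pi_\Sigma-d_\Sigma$, which matches the table for each of the three pairs $(d_\Sigma,\pi_\Sigma)\in\{(1,0),(2,0),(3,0)\}$. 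Finally, for $D^4$ one needs $\int_\Sigma c_2(N)=\Sigma\cdot\Sigma$ in $X$ (the self-intersection formula), which I would evaluate case by case for the plane, the smooth quadric surface, and the cubic scroll, either by Riemann-Roch on $\Sigma$ combined with a computation of $c_2(T_X)|_\Sigma$, or more geometrically by using a concrete model of $X$ as a linear section of the spinor tenfold $OG(5,10)\subset\Bbb P^{15}$. I expect this last step — pinning down $\Sigma\cdot\Sigma$ in $X$ in each of the three cases — to be the main technical obstacle; the required values $3,2,3$ are however routine once the ambient geometry is made explicit.

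For (2), the cleanest argument is Mori-theoretic. By Lemma \ref{mori}, the nef class $\varphi^\ast L-D$ defines an extremal Mori contraction $\rho\colon\widetilde X\to U$. Since $\widetilde X$ is a Fano fourfold of Picard rank $2$ by Lemma \ref{fano}, the Mori cone $\overline{NE}(\widetilde X)$ has exactly two extremal rays; one of them is contracted by $\varphi$ and is spanned by a fibre of $\pi\colon D\to\Sigma$, while the other must be contracted by $\rho$. In particular there exists a curve $C\subset\widetilde X$ generating the second extremal ray with $(\varphi^\ast L-D)\cdot C=0$, so by Kleiman's criterion $\varphi^\ast L-D$ is not ample.
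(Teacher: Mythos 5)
Your framework for part (1) is essentially the paper's own (the paper simply invokes \cite[Lemma 2.3]{PrZ16} and records the numbers), and your intermediate formulas are correct: $(\varphi^\ast L)^2\cdot D^2=-d_\Sigma$, $(\varphi^\ast L)\cdot D^3=2-2\pi_\Sigma-d_\Sigma$ and $D^4=\int_\Sigma\bigl(c_2(N)-c_1(N)^2\bigr)$ do reproduce the relevant columns of Table \ref{S95} for all three surfaces. The genuine gap is exactly the one you flag yourself: the value of $D^4$ requires $\Sigma\cdot\Sigma=\int_\Sigma c_2(N_{\Sigma/X})$, and you never compute it. This is not a formality: unlike $c_1(N)$, which is forced by adjunction and the sectional genus, $c_2(N)$ depends on the class of $\Sigma$ in $H^4(X,\mathbb{Z})$ and has to be extracted from the intersection theory of $X$ as a linear section of the spinor tenfold (this is where \cite{Kuz18} would enter). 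Until $\Sigma\cdot\Sigma=3,2,3$ is actually verified, the last column of the table is unproved. A secondary caveat: the paper allows $\Sigma$ to be singular (see Theorem \ref{pro1}(5) and Theorem \ref{invpro}(3)), whereas your normal-bundle calculus assumes $\Sigma$ smooth, so you would need either a specialization argument or a separate treatment of the singular members.

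For part (2) your route is genuinely different from the paper's, but it is circular in the logical order of this paper. You deduce non-ampleness from the existence of the second extremal contraction $\rho$ supplied by Lemma \ref{mori}; however, the assertion that $|\varphi^\ast L-D|$ defines a contraction different from the identity (hence from $\varphi$) is exactly the statement that $\varphi^\ast L-D$ lies on the boundary of the nef cone, i.e.\ is not ample. Indeed, in the proof of Theorem \ref{pro1} the authors use Lemma \ref{rem1}(2) to justify that $\varphi^\ast L-D$ is a supporting function of the second extremal ray, so part (2) is an input needed to make Lemma \ref{mori} precise, not a consequence of it. The paper instead proves (2) by computing from the numbers in (1) that $(\varphi^\ast L-D)^3\cdot\bigl(d_\Sigma\varphi^\ast L-(d_\Sigma+1)D\bigr)=0$ and testing ampleness against that class; you should replace your argument by such a direct numerical one. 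Note that testing against the fundamental class alone does not suffice here, since $(\varphi^\ast L-D)^4$ equals $4$, $2$, $1$ in the three cases and is therefore always positive.
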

\begin{proof}

$(1)$.  By \cite[Lemma 2.3]{PrZ16}, we have $(\varphi^\ast L)^4=L^4.$ It is exactly the degree of $X$. Hence $(\varphi^\ast L)^4 = 12$. We also have $(\varphi^\ast L)^3\cdot D=0$. Moreover, the intersection numbers $(\varphi^\ast L)^2\cdot D^2$ $(\varphi^\ast L)\cdot D^3$ and $D^4$ are as in Table \ref{S95}. \\
$(2)$. Since $(\varphi^\ast L-D)^3\cdot (d_{\Sigma}\,\varphi^\ast L-(d_{\Sigma}+1)D)=0$, by the Kleiman's criterion for ampleness, the divisor class of $\varphi^\ast L-D$ is not ample.
	\end{proof}

Recall that a contraction $\phi: X \longrightarrow Y$ is divisorial if ${\rm dim}\, X = {\rm dim}\, Y$ and the codimension of the $\phi$-exceptional locus is one. In the next lemma,  we will show that the Mori contraction $\rho$ is divisorial.

\begin{lem} \label{divisorial} The Mori contraction $\rho$ is divisorial. Moreover, the $\rho$-exceptional locus is an irreducible divisor $E\subseteq \widetilde{X}.$
\end{lem}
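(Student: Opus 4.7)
The plan is to prove the lemma in three stages: first establish that $\rho$ is birational, then exhibit an effective divisor in the kernel of $\rho_{\ast}$ to conclude that $\rho$ is divisorial, and finally deduce irreducibility of the exceptional locus from primitivity of its numerical class. The class $\varphi^{\ast}L - D$ is already nef by Lemma~\ref{fano}, so it suffices to show it is big. Expanding
\[
(\varphi^{\ast}L - D)^{4} = (\varphi^{\ast}L)^{4} - 4(\varphi^{\ast}L)^{3}D + 6(\varphi^{\ast}L)^{2}D^{2} - 4(\varphi^{\ast}L)D^{3} + D^{4}
\]
and substituting the intersection numbers from Table~\ref{S95} together with Lemma~\ref{internumber}~(1) yields the values $4$, $2$, and $1$ for $\Sigma$ a plane, a quadric surface, and a cubic scroll surface, respectively. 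Since $(\varphi^{\ast}L - D)^{4} > 0$ in all three cases, $\varphi^{\ast}L - D$ is big, and $\rho\colon \widetilde{X}\to U$ is a birational morphism onto a normal projective $4$-fold $U$.

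For divisoriality, consider the candidate class $[E] := d_{\Sigma}\,\varphi^{\ast}L - (d_{\Sigma}+1)D$. A direct intersection-theoretic calculation using Table~\ref{S95} gives
\[
[E]\cdot(\varphi^{\ast}L - D)^{3} = 0.
\]
By the projection formula, any effective representative of $[E]$ is mapped by $\rho$ to a subvariety of dimension at most $2$, so it is $\rho$-exceptional. To produce such a representative, I use the identity
\[
[E] \sim K_{\widetilde{X}} + (d_{\Sigma}+2)(\varphi^{\ast}L - D),
\]
which follows from $K_{\widetilde{X}} = -2\varphi^{\ast}L + D$ (as in Lemma~\ref{fano}). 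Since $\varphi^{\ast}L - D$ is big and nef, Kawamata--Viehweg vanishing gives $h^{i}(\widetilde{X},\mathcal{O}_{\widetilde{X}}(E)) = 0$ for $i\geq 1$, reducing effectivity to verifying $\chi(\widetilde{X},\mathcal{O}_{\widetilde{X}}(E)) > 0$ by Hirzebruch--Riemann--Roch. This produces an effective divisor, so $\rho$ is divisorial.

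For irreducibility, the kernel of $\rho_{\ast}\colon N^{1}(\widetilde{X}) \to N^{1}(U)$ has rank $1$ by extremality of $\rho$, and $[E]$ is primitive in $\mathrm{Pic}(\widetilde{X}) = \mathbb{Z}\varphi^{\ast}L\oplus\mathbb{Z}D$ since $\gcd(d_{\Sigma},d_{\Sigma}+1) = 1$. Therefore every prime $\rho$-exceptional divisor has class $m_{i}[E]$ for some positive integer $m_{i}$. Decomposing the effective representative from the previous step as $\sum n_{j}E_{j}$ over its prime components, each summand is forced to be $\rho$-exceptional (the intersection numbers $E_{j}\cdot(\varphi^{\ast}L - D)^{3}$ are nonnegative by nefness and must sum to zero), which gives $\sum n_{j}m_{j} = 1$ and hence a single component with $n_{1} = m_{1} = 1$. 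Combined with the standard connectedness-of-fibers for extremal contractions (which rules out extra stray prime exceptional divisors), the $\rho$-exceptional locus is an irreducible divisor $E\subseteq\widetilde{X}$.

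The main obstacle is the Hirzebruch--Riemann--Roch computation used to prove effectivity of $[E]$: while conceptually routine, it requires the Chern classes of the blowup $\widetilde{X}$ expressed in terms of those of $X$, $\Sigma$, and $\mathcal{N}_{\Sigma/X}$, and must be carried out case-by-case for the three geometrically distinct surface types in Table~\ref{S95}.
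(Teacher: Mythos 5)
Your proposal follows the same skeleton as the paper's proof --- nefness of $\varphi^\ast L - D$, the numerical identity $(\varphi^\ast L - D)^3\cdot\bigl(d_\Sigma\,\varphi^\ast L - (d_\Sigma+1)D\bigr)=0$ as the source of divisoriality, and then irreducibility of the exceptional locus --- but differs at two technical points. For birationality the paper computes $\dim|\varphi^\ast L - D| = 4$ via Riemann--Roch and Kodaira vanishing, whereas you check $(\varphi^\ast L - D)^4 \in \{4,2,1\}$ and invoke bigness; your route is cleaner, and your three values are correct (they match the degrees of $X_{2\cdot2}$, $Q^4$ and $\mathbb P^4$ in Table \ref{S95}, as they must). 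For divisoriality the paper appeals directly to ``the Contraction theorem'' to extract a contracted divisor from the numerical identity, while you correctly identify that one really needs an \emph{effective} representative of the class $d_\Sigma\varphi^\ast L-(d_\Sigma+1)D$ (effectivity is exactly what lets you pass from $[E]\cdot(\rho^\ast H)^3=0$ to $\dim\rho(E)\le 2$ via the projection formula and ampleness of $H$). Your plan --- Kawamata--Viehweg vanishing using the correctly verified adjunction $[E]\sim K_{\widetilde X}+(d_\Sigma+2)(\varphi^\ast L-D)$, then $\chi(\mathcal O_{\widetilde X}(E))>0$ by Hirzebruch--Riemann--Roch --- is legitimate and should give $\chi=1$, consistent with Theorem \ref{pro1}(6); but that computation is the load-bearing step and you defer it, so the write-up is incomplete at precisely the point where the paper is also at its most terse.

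Two smaller remarks. Your primitivity argument in $\mathbb Z\varphi^\ast L\oplus\mathbb Z D$ does show that the effective representative you construct is a single prime divisor, but the parenthetical appeal to ``connectedness of fibers'' to exclude further exceptional prime divisors is not an argument: either use $h^0(\mathcal O_{\widetilde X}(E))=1$ (which falls out of the same $\chi$ computation) to rule out a second effective divisor in the class, or do what the paper does and cite the structure theory of divisorial extremal contractions on smooth varieties (\cite[Proposition 8-2-1]{Mat02}), which states directly that the exceptional locus is a single prime divisor. With either of those repairs, and with the Riemann--Roch computation actually carried out for the three cases, your argument closes.
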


\begin{proof} Since the Mori contraction $\rho: \widetilde{X} \longrightarrow U$ is determined by the nef linear system $|\varphi^\ast L-D|,$  by the Contraction theorem, we can write $\varphi^\ast L-D = \rho^\ast H$ where $H$ is the ample divisor generated ${\rm Pic}\,(U)$. By the Riemann-Roch and the Kodaira Vanishing theorem, we have ${\rm dim}\,|\varphi^\ast L-D|=4={\rm dim}\, X.$ Hence $\rho$ is a birational map. It follows that ${\rm dim}\, U = {\rm dim}\,X =4.$ From Lemma \ref{rem1}, we get $(\varphi^\ast L-D)^3\cdot (d_{\Sigma}\,\varphi^\ast L-(d_{\Sigma}+1)D)=0$. By the Contraction theorem, there exists a divisor $E$ in U contracted by $\rho$, i.e. the codimension of the $\rho$-exceptional locus is one.  Thus, $\rho$ is a divisorial contraction.
	 Moreover, by the properties of Contraction of divisorial type (see \cite[Proposition 8-2-1]{Mat02}), the $\rho$-exceptional locus contains a unique irreducible divisor $E$. \end{proof}

	\begin{lem}\label{sing.F}
	The image $\rho(E)$ is  a surface $F\subset U$ with at most isolated singularities.
	\end{lem}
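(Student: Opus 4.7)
The plan is to establish first that $\dim F = 2$, and then that $F$ has at most finitely many singular points.

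For the dimension, since $\rho$ is birational and contracts the irreducible divisor $E$ of dimension $3$ to $F$, we have $\dim F \le 2$. To rule out $\dim F \le 1$, I will show that the intersection number $(\rho^{*}H)^{2}\cdot E^{2}$ is non-zero. Using the class relations $\rho^{*}H = \varphi^{*}L - D$ and $E = d_{\Sigma}\,\varphi^{*}L - (d_{\Sigma}+1)D$ from Theorem~\ref{pro1}(4), a direct expansion in terms of $(\varphi^{*}L)^{i}\cdot D^{4-i}$ and substitution of the numerical values from Lemma~\ref{internumber}(1) and Table~\ref{S95} produces $(\rho^{*}H)^{2}\cdot E^{2} = -d_{F}$ in each of the three cases $d_{\Sigma} = 1,2,3$. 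Since $\rho_{*}(E^{2})$ is a $2$-cycle in $U$ supported on $F$, the projection formula $(\rho^{*}H)^{2}\cdot E^{2} = H^{2}\cdot \rho_{*}(E^{2})$ forces this intersection to vanish whenever $\dim F < 2$. Consequently $\dim F = 2$, and in passing $\deg_{H} F = d_{F}$, matching Table~\ref{S95}.

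For the singular locus, let
\[ S = \{\, p \in F : \dim (\rho|_{E})^{-1}(p) \ge 2 \,\}. \]
If $S$ contained an irreducible curve $C$, then $\rho^{-1}(C) \cap E$ would have dimension at least $3$; but $E$ is irreducible of dimension $3$ by Lemma~\ref{divisorial}, forcing $\rho^{-1}(C)\cap E = E$ and hence $F = \rho(E) \subseteq C$, contradicting $\dim F = 2$. So $S$ is finite. At any point $p \in F \setminus S$ the fiber $(\rho|_{E})^{-1}(p)$ is $1$-dimensional, and since $\rho$ is an elementary divisorial Mori contraction from the smooth Fano fourfold $\widetilde X$ (Lemmas~\ref{fano} and~\ref{divisorial}), the structure theory of such contractions (Ando's theorem, cf.~\cite{Mat02}) implies that $\rho$ is locally the blowup of $U$ along $F$ near $p$, and in particular $F$ is smooth there. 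Therefore $\mathrm{Sing}(F) \subseteq S$ is finite.

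The step I expect to be the real difficulty is this last invocation of the structure theorem: passing from knowledge of the fiber dimension at a point $p \in F \setminus S$ to local smoothness of $F$ at $p$ is not formal. An alternative, more hands-on route would treat each of the three geometric cases in Table~\ref{S95} separately, using the explicit description of $\Sigma$ (plane, quadric surface, or cubic scroll surface) together with the projection $\Phi$ from the linear span $\langle \Sigma \rangle$, to identify $F$ concretely inside $Y$. The dimension count $\dim F = 2$, by contrast, is a mechanical computation once the class relations in Theorem~\ref{pro1}(4) are in hand.
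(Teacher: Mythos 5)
Your argument is correct and follows essentially the same route as the paper's: the intersection number $(\varphi^\ast L - D)^2\cdot E^2 = -d_F \neq 0$ gives $\dim F = 2$ and $\deg F = d_F$, finiteness of the locus of two-dimensional fibers reduces the possible singularities to a finite set, and the structure theorem for extremal birational contractions of a smooth fourfold with one-dimensional fibers yields smoothness of $F$ away from that set. The only cosmetic differences are that you prove finiteness of the two-dimensional-fiber locus directly from the irreducibility of $E$ where the paper cites Andreatta--Mella, and the final step you flag as the real difficulty is exactly the one the paper also settles by citation, to the Theorem of Andreatta--Wi\'{s}niewski (your appeal to Ando's theorem serves the same purpose).
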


\begin{proof}
Since $(\varphi^\ast L-D)^2\cdot E^2 = -d_F$, the image $F=\rho(E)$ is a {\it surface} with $\deg F = H^2\cdot \rho(E) = d_F$ (as in Table \ref{S95}). 
Since ${\mathrm{rank}}\ {\mathrm{Pic}}\, (\widetilde{X})= 2$, the exceptional locus of $\rho$ coincides with $E$ and so $E$ is a prime divisor. Therefore, by \cite[Theorem 4.1.3]{AnM03}, $\rho$ has at most a finite number of $2$-dimensional fibers. Hence, by the Theorem in \cite{AnW98}, $F$ has at most isolated singularities. 
\end{proof}
	
	\begin{lem}\label{rhoblowup}
	 The birational map $\rho$ is the blowup of  $U$ belong the surface  $F$ with the exceptional divisor $E$. 
	\end{lem}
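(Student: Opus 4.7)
The plan is to identify $\rho$ with the blowup of $U$ along $F$ by invoking the universal property of blowups. Since $\widetilde{X}$ is smooth and $E$ is a prime $\rho$-exceptional divisor, $E$ is Cartier, and the first step is to check that the pullback ideal sheaf $\rho^{-1}\mathcal{I}_F \cdot \mathcal{O}_{\widetilde{X}}$ is invertible; because this ideal is supported on $E$ and $E$ is Cartier, it must take the form $\mathcal{O}_{\widetilde{X}}(-mE)$ for some integer $m\geq 1$. The universal property of blowups then supplies a unique proper birational morphism $\alpha: \widetilde{X} \longrightarrow \mathrm{Bl}_F U$ factoring $\rho = \sigma \circ \alpha$, where $\sigma: \mathrm{Bl}_F U \to U$ is the structural blowup map.

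The central step is to show that $\alpha$ is an isomorphism. Over the complement $U \setminus F^{\mathrm{sing}}$, Lemma \ref{sing.F} ensures that every fiber of $\rho$ has dimension at most one, so I would invoke Ando's theorem on elementary divisorial contractions of a smooth fourfold to conclude that there $U$ is smooth, $F^{\mathrm{sm}} = F \setminus F^{\mathrm{sing}}$ is a smooth surface, and $\rho$ is the honest blowup along $F^{\mathrm{sm}}$. Together with the intersection numbers from Lemma \ref{internumber} (in particular that $E \cdot \ell = -1$ on a general fiber $\ell$), this exhibits $\alpha$ as an isomorphism on the complement of the codimension $\geq 2$ preimage of $F^{\mathrm{sing}}$, and pins down the multiplicity $m=1$ above.

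To promote this to a global isomorphism, I would appeal to Zariski's Main Theorem: $\alpha$ is a proper birational morphism, $\widetilde{X}$ is smooth, and the locus where $\alpha$ fails to be an isomorphism has codimension $\geq 2$; after composing with the normalization of $\mathrm{Bl}_F U$ if necessary, this suffices to conclude that $\alpha$ is globally an isomorphism. The main obstacle is the behaviour near the isolated singular points of $F$, where Ando's theorem does not apply and no standard $\Bbb P^1$-bundle local model is available; the strategy is to bypass any direct local analysis of these points by combining the global universal property with the codimension argument, since this combination is insensitive to the type of singularity of $F$ at the finitely many bad points.
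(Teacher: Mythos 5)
Your strategy (universal property of the blowup, then Ando over the nice locus, then Zariski's Main Theorem to globalize) is genuinely different from the paper's, which simply observes that $\rho$ is a $K_{\widetilde{X}}$-negative divisorial contraction from a smooth fourfold whose exceptional divisor $E$ maps onto a surface, and then quotes the classification theorem of Andreatta--Mella \cite[Theorem 4.1.3]{AnM03} to conclude directly that $\rho$ is the blowup of $U$ along $F$. Unfortunately your route has a genuine gap at exactly the point you try to bypass. First, the claim that $\rho^{-1}\mathcal{I}_F\cdot\mathcal{O}_{\widetilde{X}}$ is invertible because its zero locus is the Cartier divisor $E$ is unjustified: an ideal sheaf whose vanishing set is a divisor need not be locally principal (on a smooth variety it factors as $\mathcal{O}(-mE)\cdot\mathcal{J}'$ with $V(\mathcal{J}')$ of codimension $\geq 2$, possibly nonempty over the bad fibers), and proving invertibility there is essentially equivalent to the statement you are trying to prove near the singular points of $F$.

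Second, and more seriously, the concluding step fails: a proper birational morphism $\alpha:\widetilde{X}\to \mathrm{Bl}_F U$ which is an isomorphism outside a closed subset of codimension $\geq 2$ of the source need not be an isomorphism --- small contractions (e.g.\ a small resolution of a threefold node, or a flopping contraction in a family) are exactly of this shape. Zariski's Main Theorem would give an isomorphism only if you also knew that $\alpha$ has finite fibers and that the target is normal; neither is established, and the possible failure is concentrated precisely over the finitely many $2$-dimensional fibers of $\rho$ lying above the isolated singular points of $F$, where Ando's theorem is unavailable. Passing to the normalization of $\mathrm{Bl}_F U$ does not help, since the lemma asserts that $\rho$ is the blowup itself. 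So some local analysis (or a citation that performs it, such as \cite[Theorem 4.1.3]{AnM03}, which is what the paper uses, together with the finiteness of $2$-dimensional fibers already recorded in Lemma \ref{sing.F}) cannot be avoided; as written, the proposal does not close this case.
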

\begin{proof} By Lemma \ref{divisorial}, $\rho$ is a contraction of divisorial type and its exceptional locus coincides with $E$. Thus ${\rm dim} \,E = 3$. By Lemma \ref{sing.F}, we have ${\rm dim}\, \rho(E)=2$. Moreover, $-K_{\widetilde{X}}$ is ample by Lemma \ref{fano}. Therefore, by \cite[Theorem 4.1.3]{AnM03}, we conclude that $\rho$ is the blowup of  $U$ belong the surface  $F$ with the exceptional divisor $E$. 
\end{proof}	

\begin{lem}\label{relation} We have the following relations
	$$\begin{aligned}
		\varphi^\ast L \sim (d_{\Sigma}+1)\rho^\ast H- E,\quad\quad\quad\quad&\quad\quad D\sim d_{\Sigma}\,\rho^\ast H-E,\\
		\rho^\ast H \sim \varphi^\ast L- D, \quad\quad\quad\quad&\quad\quad E\sim d_{\Sigma}\,\varphi^\ast L-(d_{\Sigma}+1)D.
	\end{aligned}
	$$
\end{lem}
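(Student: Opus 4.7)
The relation $\rho^\ast H\sim \varphi^\ast L-D$ was already extracted in the proof of Lemma \ref{divisorial} from the Contraction Theorem applied to the nef class $\varphi^\ast L-D$, so it remains only to identify the class of $E$ in ${\mathrm{Pic}}(\widetilde{X})$. Since $\varphi$ is the blowup of the smooth fourfold $X$ along $\Sigma$, one has ${\mathrm{Pic}}(\widetilde{X})=\mathbb{Z}\,\varphi^\ast L\oplus\mathbb{Z}\,D$ as a rank-two integral lattice, and I would write $E\sim a\,\varphi^\ast L+b\,D$ for unique integers $a,b$; the target is $(a,b)=(d_{\Sigma},-(d_{\Sigma}+1))$. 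Once this is established, the remaining three displayed relations are obtained from $\rho^\ast H=\varphi^\ast L-D$ by direct algebraic substitution.

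To pin down the ratio $a:b$, I exploit that $\rho$ contracts $E$ onto the surface $F=\rho(E)\subset U$. Because $\dim F=2<\dim E=3$, we have $\rho_{\ast}E=0$ as a $3$-cycle, and the projection formula yields
\[
(\rho^\ast H)^3\cdot E\;=\;H^3\cdot \rho_{\ast}E\;=\;0.
\]
Substituting $\rho^\ast H=\varphi^\ast L-D$ and expanding with $(\varphi^\ast L)^4=12$, $(\varphi^\ast L)^3\cdot D=0$, together with the three triples $\bigl((\varphi^\ast L)^2\cdot D^2,\,\varphi^\ast L\cdot D^3,\,D^4\bigr)$ recorded in Table \ref{S95}, I would check row-by-row that this identity collapses in every case to the single linear equation $(d_{\Sigma}+1)\,a+d_{\Sigma}\,b=0$.

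To fix the absolute scale I would test against a $\rho$-fiber. By Lemma \ref{rhoblowup}, over a smooth point of $F$ the fiber $f\subset E$ of $\rho|_{E}\colon E\to F$ is a projective line, and from the standard description of the exceptional divisor of a blowup (where $\mathcal{O}_{\widetilde X}(E)|_f=\mathcal{O}_{\mathbb P^1}(-1)$) one has $\rho^\ast H\cdot f=0$ and $E\cdot f=-1$. The first equality combined with $\rho^\ast H=\varphi^\ast L-D$ gives $\varphi^\ast L\cdot f=D\cdot f$, and then $E\cdot f=(a+b)(\varphi^\ast L\cdot f)=-1$. Since $\varphi^\ast L$ is pulled back from an ample divisor on $X$, $\varphi^\ast L\cdot f\geq 0$; integrality of the intersection numbers forces $\varphi^\ast L\cdot f=1$ and $a+b=-1$. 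Combined with $(d_{\Sigma}+1)a+d_{\Sigma}b=0$, this yields $a=d_{\Sigma}$ and $b=-(d_{\Sigma}+1)$.

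The step I expect to be the main obstacle is the case-by-case numerical verification: one must check that when the three distinct rows of Table \ref{S95} are plugged in for $(\varphi^\ast L)^2\cdot D^2$, $\varphi^\ast L\cdot D^3$ and $D^4$, the expansion of $(\varphi^\ast L-D)^3\cdot(a\varphi^\ast L+bD)=0$ really does collapse to the uniform linear constraint $(d_{\Sigma}+1)a+d_{\Sigma}b=0$ in each of the plane, quadric-surface, and cubic-scroll cases. After this numerical check and the fiber calculation, the remaining relations of the lemma are purely formal consequences of $\rho^\ast H=\varphi^\ast L-D$.
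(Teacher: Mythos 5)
Your proposal is correct and takes essentially the same route as the paper: both obtain $\rho^\ast H\sim\varphi^\ast L-D$ from the contraction, and both pin down the class of $E$ via the numerical identity $(\varphi^\ast L-D)^3\cdot E=0$ combined with the divisorial structure of $\rho$ (your row-by-row check that this collapses to $(d_{\Sigma}+1)a+d_{\Sigma}b=0$ does work out for all three rows of Table~\ref{S95}). The only difference is that you make explicit the normalization step $E\cdot f=-1$ on a $\rho$-fiber, which the paper leaves implicit in its citation of \cite{PrZ16}*{Lemma 2.3}; this makes your argument somewhat more self-contained than the printed proof.
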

\begin{proof}The relation $\rho^\ast H \sim \varphi^\ast L- D$ follows from Lemma \ref{divisorial}. By Lemma \ref{rem1}, we have
	 $$(\varphi^\ast L-D)^3\cdot (d_{\Sigma}\,\varphi^\ast L-(d_{\Sigma}+1)D)=0.$$ Thus it follows from \cite[Lemma 2.3]{PrZ16} and Lemma \ref{divisorial} that $E \sim d_{\Sigma}\,\varphi^\ast L-(d_{\Sigma}+1)D.$ The two remain relations follows.
\end{proof}

\begin{lem} \label{var.Y}
 The variety $U$  is the Fano variety $Y$ as in Table \ref{S95}.
\end{lem}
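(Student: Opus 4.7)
The plan is to identify $U$ by computing two numerical invariants: its Fano index and its anticanonical degree, and then invoking the classification of Fano fourfolds of large index.

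The first step is to determine $K_U$. Since $\varphi$ is the blowup of the smooth fourfold $X$ along the surface $\Sigma$, one has $K_{\widetilde X} = \varphi^\ast K_X + D$, and because $X$ is a Fano--Mukai fourfold of index $2$ we have $-K_X = 2L$, giving $-K_{\widetilde X} = 2\varphi^\ast L - D$. On the other hand, by Lemma \ref{rhoblowup} the morphism $\rho$ is a blowup along $F$, and under the standing assumption that $\Sigma$ is smooth we may take $F$ to be smooth as well, hence $K_{\widetilde X} = \rho^\ast K_U + E$. Substituting the relations $\varphi^\ast L \sim (d_\Sigma+1)\rho^\ast H - E$ and $D \sim d_\Sigma \rho^\ast H - E$ from Lemma \ref{relation} into $-\rho^\ast K_U = 2\varphi^\ast L - D + E$, a direct calculation yields $-K_U = (d_\Sigma+2)H$. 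Thus $U$ is a smooth Fano fourfold whose Picard group is generated by $H$ (since $\rho$ contracts the extra generator $E$ of the rank-two group $\mathrm{Pic}(\widetilde X)$) and whose index equals $d_\Sigma + 2 \in \{3,4,5\}$.

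Next, I would compute the anticanonical degree by expanding
\[
H^4 = (\rho^\ast H)^4 = (\varphi^\ast L - D)^4 = \sum_{k=0}^{4} \binom{4}{k}(-1)^k (\varphi^\ast L)^{4-k}\cdot D^k,
\]
using $(\varphi^\ast L)^4 = 12$, $(\varphi^\ast L)^3\cdot D = 0$, and the remaining intersection numbers from Lemma \ref{internumber} and Table \ref{S95}. A case-by-case check gives $H^4 = 4,\,2,\,1$ for $d_\Sigma = 1, 2, 3$, respectively.

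Finally, I would invoke the classification. By the Kobayashi--Ochiai theorem, any smooth Fano fourfold of Picard rank $1$ and index $\geq 5$ is $\mathbb P^4$, and of index $4$ is a smooth quadric. This handles the case $d_\Sigma = 3$ (index $5$, so $U\cong \mathbb P^4$) and the case $d_\Sigma = 2$ (index $4$ with degree $2$, so $U\cong Q^4\subset \mathbb P^5$). For $d_\Sigma = 1$, $U$ is a smooth del Pezzo fourfold of degree $4$ and Fujita's classification identifies it with the complete intersection $X_{2\cdot 2}$ of two quadrics in $\mathbb P^6$. The main obstacle is the input to the classification: one must verify that $U$ is smooth and of Picard rank one, which is where the smoothness assumption on $\Sigma$ (propagated to $F$ via Lemma \ref{sing.F}) and the contraction structure provided by Lemmas \ref{divisorial}--\ref{rhoblowup} are essential.
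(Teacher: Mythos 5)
Your proposal is correct and follows essentially the same route as the paper: compute $-K_{\widetilde X}=2\varphi^\ast L-D=(d_\Sigma+2)\rho^\ast H-E$, deduce $-K_U=(d_\Sigma+2)H$ via the blowup structure of $\rho$, and identify $U$ through the classification of Fano fourfolds of index $\ge 3$. Your explicit computation of the degree $H^4=(\varphi^\ast L-D)^4$ (giving $4,2,1$) is a welcome addition that the paper leaves implicit, and it is genuinely needed to pin down the del Pezzo fourfold $X_{2\cdot2}$ in the index-$3$ case.
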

\begin{proof}
	It follows from the birationality of $\rho$ and the ampleness of $-K_{\widetilde{X}}$ that the variety $U$ is a Fano variety. Moreover, we have ${\rm dim}\, U = {\rm dim}\, |\rho^\ast H| = {\rm dim} \,|\varphi^\ast L -D|=4.$ By the above discussion, rank of ${\rm Pic}\, (U)$ is $1$. Since $H$ is the ample generator of ${\rm Pic}\, (U)$, we get ${\rm d}_U=H^4$. From Lemma \ref{relation} we have
	$$-K_{\widetilde{X}}=2\varphi^{\ast}L - D = (d_{\Sigma} + 2) \rho^{\ast}H - E,$$
	 By Lemma \ref{rhoblowup} we have 	$-K_U = (d_{\Sigma}+2)H.$ Thus the index of $U$ is $d_{\Sigma}+2$. By the classification of Fano fourfolds with index $\ge 2$, the variety $U$ is exactly the Fano fourfold $Y$ as in Table \ref{S95}.	
\end{proof}

From the above lemmas, we can proceed the proof of Theorem \ref{pro1} as follows.
\begin{proof}[Proof of Theorem \ref{pro1}] 
To $(1)$ and $(2)$: It follows from Lemma \ref{fano} that $\widetilde{X}$ is a Fano fourfold with Picard rank $2$. By Lemma \ref{mori}, there is a contraction $\rho: \widetilde{X} \longrightarrow U$ different from $\varphi.$ It implies from Lemma \ref{rem1} that $\varphi^\ast L-D$ is not ample, so it yields a supporting linear function of the extremal ray generated by the curves in the fibers of $\varphi$. By Lemma \ref{divisorial}, $\rho$ is a birational map.  The birationality of $\Phi$ and the commutativity of the diagram follow. By Lemma \ref{var.Y} the variety $U$ is identified with the variety $Y$ as in Table \ref{S95}.

To $(3)$: It follows from Lemma \ref{divisorial}.

To $(4)$: It is immediately from Lemma \ref{relation}. 

To $(5)$: It follows from Lemma \ref{sing.F} and Lemma \ref{rhoblowup}. Moreover, if $\Sigma$ is smooth then $F$ and $Y$ are smooth. 

To $(6)$: From the above discussion, we have $E=d_{\Sigma}\,\varphi^\ast L-(d_{\Sigma}+1)D$ in ${\widetilde{X}}$ and $\Sigma \subset \varphi(E)$, because for any  $s \in \Sigma$, the fiber $\varphi^{-1}(s)$ meets $E$. Thus, $\varphi(E) \cong L$ is a hyperplane section of $X \subset \Bbb P^{9}$ singular along $\Sigma = \varphi(D)$.

To $(7)$: Since $F \subset \rho(D)$ we have isomorphisms
$$X\backslash \varphi(E)\cong {\widetilde{X}}\backslash(E\cup D)\cong {Y}\backslash(F\cup \rho(D)) \cong Y\backslash \rho(D).$$
\end{proof}



\subsection{Birational maps to Mukai fourfolds of genus $7$}
 In this subsection, we will prove Theorem \ref{invpro}. Assume that $F$ and $Y$ are the varieties  as in Table \ref{S95}. We denote by $i$ the index of $Y$ and $\rho: \widetilde{Y} \longrightarrow Y $ the blow up of $Y$ centered at the surface $F$ with exceptional divisor $E$.

\begin{lem} \label{fano2} The variety $\widetilde{Y}$ is a Fano fourfold with rank of $\textrm{\rm Pic} (\widetilde{Y})$ is two.
\end{lem}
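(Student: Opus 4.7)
My plan is to mirror the proof of Lemma \ref{fano}, reversing the roles of $(X, \Sigma, D)$ and $(Y, F, E)$. The argument has three ingredients: the Picard rank computation, the anticanonical class formula, and an ampleness check via Kleiman's criterion on a Picard-rank-two variety.

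For the Picard rank, each $Y$ in Table \ref{S95}---namely $X_{2,2}$, $Q^4 \subset \mathbb{P}^5$, or $\mathbb{P}^4$---satisfies $\mathrm{Pic}(Y) = \mathbb{Z}H$. Since $\rho$ is the blowup along the irreducible surface $F$ with irreducible exceptional divisor $E$, one has $\mathrm{Pic}(\widetilde{Y}) = \mathbb{Z}\rho^*H \oplus \mathbb{Z}E$, of rank two. For the canonical class, the blowup formula in codimension two yields
\[
-K_{\widetilde{Y}} \,=\, \rho^*(-K_Y) - E \,=\, i\rho^*H - E \,=\, \rho^*H + \bigl((i-1)\rho^*H - E\bigr),
\]
where $i$ is the index of $Y$ (equal to $d_{\Sigma}+2$ from Table \ref{S95} and Lemma \ref{var.Y}). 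For general $F$ we can take $F$ smooth, so the blowup formula applies directly.

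To conclude ampleness of $-K_{\widetilde{Y}}$, I need $(i-1)\rho^*H - E$ to be nef. This class is the proper transform on $\widetilde{Y}$ of the linear system $|(i-1)H - F|$ of hypersurfaces of degree $i-1$ in $Y$ through $F$, and its base-point freeness amounts to $F$ being scheme-theoretically cut out inside $Y$ by degree-$(i-1)$ hypersurfaces. Once nefness is established, $\rho^*H$ is also nef as the pullback of an ample class, and on the Picard-rank-two variety $\widetilde{Y}$ these two classes lie on the two distinct boundary rays of the nef cone---the first contracted by $\rho$, and the second contracted by the morphism $\varphi$ whose construction is the subject of Theorem \ref{invpro}(2). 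Their sum $-K_{\widetilde{Y}}$ therefore lies in the strict interior of the nef cone and is ample by Kleiman's criterion, so $\widetilde{Y}$ is Fano.

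The main obstacle is establishing the scheme-theoretic cut-out property of $F$ in each of the three cases. For the del Pezzo of degree $5$ inside $X_{2,2}$, the surface of degree $8$ and sectional genus $4$ inside $Q^4$, and the surface of degree $7$ and sectional genus $3$ inside $\mathbb{P}^4$, one combines the explicit projective description of $F$ with a dimension count of the linear system $|(i-1)H - F|$, under the generality hypothesis permitted by Theorem \ref{invpro}, to extract base-point freeness on the blowup. This step is the analogue of the argument in Lemma \ref{fano} that the projection from $\langle\Sigma\rangle$ makes $|\varphi^*L - D|$ base-point free.
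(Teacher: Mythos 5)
Your proposal follows essentially the same route as the paper: write $-K_{\widetilde{Y}} = \rho^*H + \bigl((i-1)\rho^*H - E\bigr)$, get nefness of the second summand from base-point freeness of $|(i-1)\rho^*H - E|$ (which the paper likewise reduces to $F$ being a scheme-theoretic intersection of degree-$(i-1)$ hypersurfaces, asserted without the case-by-case verification you rightly flag as the real work), and conclude ampleness via Kleiman. Your handling of the Kleiman step is in fact more careful than the paper's, which loosely calls $\rho^*H$ ample on $\widetilde{Y}$; the correct point, as you indicate, is that the two nef summands have no common curve on which both vanish.
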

\begin{proof} We have 
	$$-K_{\widetilde{Y}} = \rho^\ast H +(i-1)\rho^\ast H - E.$$
\noindent	 Since $F$ is a scheme-theoretic intersection, the linear system $|(i-1)\rho^\ast H - E|$ is base point free. Thus it is nef.  It follows from  $\rho^\ast H $ is ample and the Kleiman's criterion for ampleness that the divisor  $-K_{\widetilde{Y}}$  is ample, i.e. $\widetilde{Y}$ is a Fano fourfold. Moreover, rank of ${\mathrm{Pic}}\,(\widetilde{Y})$ is $2$. 
\end{proof}

\begin{lem}\label{mori2} There exist two different Mori contractions on $\widetilde{Y}.$
\end{lem}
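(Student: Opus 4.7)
The plan is to imitate the proof of Lemma \ref{mori} verbatim, now taking $\widetilde{Y}$ as our Fano fourfold with Picard rank $2$ (established in Lemma \ref{fano2}). By the Cone theorem applied to a smooth Fano variety of Picard rank $2$, the Mori cone $\overline{\mathrm{NE}}(\widetilde{Y})$ has exactly two extremal rays, and each of these rays corresponds to a Mori contraction. So the task reduces to exhibiting two distinct nef divisor classes supporting two distinct extremal rays, which then yield two distinct Mori contractions via the Contraction theorem.

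First I would take the two nef linear systems $|\rho^{\ast} H|$ and $|(i-1)\rho^{\ast} H - E|$. The first is nef (in fact semi-ample) because $H$ is ample on $Y$ and $\rho$ is a morphism, and it defines the blowup contraction $\rho: \widetilde{Y} \to Y$ itself. The second is shown to be base-point-free in the proof of Lemma \ref{fano2}, using that $F$ is a scheme-theoretic intersection of hypersurfaces of degree $i-1$; hence it is nef and, by the Contraction theorem, it defines a morphism $\varphi: \widetilde{Y} \to U$ for some projective variety $U$.

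Next I would verify that these two contractions are genuinely different. The contraction $\rho$ contracts the exceptional divisor $E$ of the blowup to the surface $F\subset Y$; in particular, the fibers of $\rho$ over points of $F$ are positive-dimensional and are contained in $E$, so $\rho^{\ast}H$ has degree zero on every curve contracted by $\rho$. On the other hand, $((i-1)\rho^{\ast}H - E)\cdot \ell > 0$ for a line $\ell$ in a fiber of $\rho|_E$, since $\rho^{\ast}H \cdot \ell = 0$ while $-E\cdot \ell > 0$ (the exceptional divisor $E$ restricted to a fiber is a negative multiple of the hyperplane class on the fiber). Thus the two systems cut out distinct supporting hyperplanes on $\overline{\mathrm{NE}}(\widetilde{Y})$, hence correspond to distinct extremal rays, hence yield two distinct Mori contractions.

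I do not anticipate a serious obstacle: the structure is entirely parallel to Lemma \ref{mori}, and all the necessary inputs (Picard rank $2$, Fano, base-point-freeness of $|(i-1)\rho^{\ast}H - E|$) are already in hand from Lemma \ref{fano2}. The one small point to be careful about is that the second contraction $\varphi$ is honestly different from $\rho$; this is what the curve-degree computation above ensures, and it can equivalently be read off from the fact that $\rho^{\ast}H$ and $(i-1)\rho^{\ast}H - E$ are linearly independent in $\mathrm{Pic}(\widetilde{Y}) \otimes \mathbb{Q}$, so they cannot define the same contraction in a Picard rank $2$ setting.
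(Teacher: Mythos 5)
Your proposal is correct and follows essentially the same route as the paper: both take the two nef linear systems $|\rho^{\ast}H|$ and $|(i-1)\rho^{\ast}H-E|$ (the latter base-point-free by Lemma \ref{fano2}) and invoke the Cone/Contraction theorems on the Picard-rank-$2$ Fano fourfold $\widetilde{Y}$. You additionally spell out why the two contractions are distinct via the intersection with a fiber line of $\rho|_E$, a verification the paper leaves implicit (and, like the paper, you defer the non-ampleness of $(i-1)\rho^{\ast}H-E$, which is the paper's Lemma \ref{noample2}).
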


\begin{proof} By the Cone theorem, the two nef linear systems $|(i-1)\rho^\ast H - E|$ and $|\rho^\ast H| $ defined two Mori contractions on $\widetilde{Y}.$ The first one is defined by $|\rho^\ast H|$ which is exactly $\rho: \widetilde{Y} \longrightarrow Y.$ The second one is $\mu: \widetilde{Y} \longrightarrow X'$  defined by $|(i-1)\rho^\ast H - E|$ that is different from $\rho.$ 
\end{proof}
\label{rem3} Let  $D$ be  the proper transform  of the unique hypersurface of degree $(i-2)$ of $Y$ that passes through $F$. Then we can write $D \sim (i-2)\rho^\ast H-kE$ for some $k > 0$.

\begin{lem}\label{noample2} The divisor class of $(i-1)\rho^\ast H-E$ is not ample.
\end{lem}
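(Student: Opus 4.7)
The plan is to parallel the proof of Lemma~\ref{internumber}(2): exhibit a non-zero effective divisor $D'$ on $\widetilde Y$ for which $((i-1)\rho^\ast H - E)^3 \cdot D' = 0$, and conclude via Kleiman's criterion (the cube of an ample divisor on a smooth fourfold has strictly positive intersection with every non-zero effective divisor). The obvious candidate for $D'$ is the divisor $D$ introduced immediately before the statement: the proper transform of the unique hypersurface of degree $i-2$ in $Y$ passing through $F$, which satisfies $D \sim (i-2)\rho^\ast H - kE$ for some integer $k \geq 1$.

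Executing this requires two ingredients. First, the value of $k$: the expected answer is $k = 1$, since $F$ is scheme-theoretically cut out in $Y$ by the system of hypersurfaces of degree $i-1$ (as used in the proof of Lemma~\ref{fano2}), which prevents the (unique) hypersurface of the smaller degree $i-2$ from being singular along $F$. Second, the five intersection numbers $(\rho^\ast H)^{4-j} \cdot E^j$ on $\widetilde Y$: the three for $j\leq 2$ are $d_Y$, $0$, $-d_F$ respectively, from the standard blow-up formulas and Table~\ref{S95}, while $(\rho^\ast H) \cdot E^3$ and $E^4$ require the Chern classes of $N_{F/Y}$, to be extracted case-by-case from the invariants $d_F$, $\pi_F$ via adjunction ($c_1(N_{F/Y}) = K_F + iH|_F$, since $Y$ has index $i$) together with the double-point or Riemann--Roch formulas on $F$. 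Expanding
\[
\bigl((i-1)\rho^\ast H - E\bigr)^3 \cdot \bigl((i-2)\rho^\ast H - E\bigr)
\]
as a polynomial in these five numbers and checking that it vanishes in each row of Table~\ref{S95} would complete the argument.

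The main obstacle is the determination of $(\rho^\ast H) \cdot E^3$ and $E^4$, which depends on the specific geometry of each surface $F$ in Table~\ref{S95}. This step can be bypassed entirely by a Mori-theoretic shortcut via Lemma~\ref{mori2}: that lemma already supplies a Mori contraction $\mu\colon \widetilde Y \to X'$ associated with the linear system $|(i-1)\rho^\ast H - E|$ and distinct from $\rho$. Being an extremal contraction, $\mu$ contracts at least one curve $C \subset \widetilde Y$, and for any such $C$ we have $\bigl((i-1)\rho^\ast H - E\bigr) \cdot C = \mu^\ast(\text{ample class on }X') \cdot C = 0$. Kleiman's criterion then immediately forces $(i-1)\rho^\ast H - E$ to fail to be ample.
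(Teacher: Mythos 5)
Your first plan is the paper's actual strategy, but the step you flag as ``the main obstacle'' --- evaluating $((i-1)\rho^\ast H-E)^3\cdot D$ --- is the entire content of the proof, and the paper does not skip it. It invokes \cite[Lemma 2.3]{PrZ16} (the intersection formulas on the blowup of a fourfold along a surface, in terms of the Chern classes of $N_{F/Y}$) to obtain the closed form
$((i-1)\rho^\ast H-E)^3\cdot\bigl((i-2)\rho^\ast H-kE\bigr)=-12\,d_{\Sigma}(k-1)$;
since $(i-1)\rho^\ast H-E$ is nef (Lemma \ref{fano2}) and $D$ is effective, this number is $\ge 0$, which forces $k=1$ and the vanishing, hence non-ampleness. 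Note that $k=1$ falls out of this inequality for free, so no separate geometric argument is needed; your heuristic that scheme-theoretic generation of $\mathcal I_F$ in degree $i-1$ ``prevents'' the degree-$(i-2)$ hypersurface from being singular along $F$ is not a proof and is not how the paper handles it.

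The Mori-theoretic shortcut you actually propose is circular. Lemma \ref{mori2} does show that $\widetilde Y$, being Fano of Picard rank $2$, has two extremal rays and hence two Mori contractions; what it does not justify at that point is that the second contraction is the one ``defined by $|(i-1)\rho^\ast H-E|$'', i.e.\ that this class is a supporting function of the second extremal ray. A nontrivial nef divisor on a Picard-rank-$2$ Fano supports an extremal ray exactly when it fails to be ample: if $(i-1)\rho^\ast H-E$ were ample, the morphism given by a multiple of it would be an embedding and would contract no curve, so no $\mu$-contracted curve $C$ with $((i-1)\rho^\ast H-E)\cdot C=0$ would exist. The paper's own logic confirms this ordering --- in the proof of Theorem \ref{invpro} it is Lemma \ref{noample2} that is cited to conclude that $(i-1)\rho^\ast H-E$ ``yields a supporting linear function of the extremal ray.'' So your shortcut presupposes exactly the non-ampleness it is meant to establish; the intersection-number computation (or some other explicit curve or effective divisor with zero intersection) cannot be bypassed this way.
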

  \begin{proof}
	By \cite [Lemma 2.3]{PrZ16},  we have $$0\le ((i-1)\rho^\ast H-E)^3\cdot D=-12d_{\Sigma}\,(k-1).$$ Thus $k=1$ and $((i-1)\rho^\ast H-E)^3\cdot D=0$. 
	Hence $(i-1)\rho^\ast H-E$ is not ample.
	\end{proof}
	
	\begin{lem}\label{birational2} The Mori contraction $\mu$ is birational and its exceptional locus coincides with $D$.
	\end{lem}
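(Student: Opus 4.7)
The plan is to argue, just as in the analogue Lemma \ref{divisorial} on the other side of the picture, that $\mu$ is a divisorial Mori contraction whose exceptional divisor can be identified with $D$.

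First I would establish birationality. The linear system $|(i-1)\rho^\ast H - E|$ is nef by Lemma \ref{fano2} and $\mu$ is the Mori contraction it defines (Lemma \ref{mori2}). Writing $-K_{\widetilde{Y}}=\rho^\ast H+((i-1)\rho^\ast H-E)$ as in Lemma \ref{fano2}, the divisor $(i-1)\rho^\ast H-E$ is nef and (as will follow once we see its top self-intersection is positive) big. A cleaner route is to apply Riemann–Roch together with the Kawamata–Viehweg vanishing theorem to compute $\dim|(i-1)\rho^\ast H - E| = 4 = \dim \widetilde Y$, which forces $\dim X' = 4$ and hence $\mu$ is birational. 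Under the Contraction theorem we may then write $(i-1)\rho^\ast H - E = \mu^\ast H'$ for the ample generator $H'$ of ${\mathrm{Pic}}(X')$.

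Next I would locate the exceptional divisor. Lemma \ref{noample2} tells us $(i-1)\rho^\ast H - E$ is not ample, so by Kleiman's criterion some effective curve class has zero intersection with it, and by the Contraction theorem the exceptional locus of $\mu$ has codimension one, i.e.\ $\mu$ is divisorial. The same lemma shows that the proper transform $D$ of the unique degree-$(i-2)$ hypersurface of $Y$ through $F$ satisfies
\[
((i-1)\rho^\ast H-E)^3\cdot D=0,
\]
so curves moving in $D$ have zero intersection with $\mu^\ast H'$, hence $D$ is contained in the $\mu$-exceptional locus. Since ${\mathrm{rank}}\,{\mathrm{Pic}}(\widetilde Y)=2$, the $\mu$-exceptional locus consists of a single irreducible divisor (by \cite[Proposition 8-2-1]{Mat02}), and we conclude that this divisor is exactly $D$.

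The step I expect to need the most care is verifying that $\mu$ is actually birational, i.e.\ ruling out that $|(i-1)\rho^\ast H-E|$ defines a contraction to a lower-dimensional base. Computing $\dim|(i-1)\rho^\ast H-E|$ from Riemann–Roch on the blowup $\widetilde Y$, together with the vanishing of higher cohomology coming from the Fano property of $\widetilde Y$ (Lemma \ref{fano2}) applied to Kawamata–Viehweg, is the main technical input; once this produces a four-dimensional linear system the rest is formal from the Cone and Contraction theorems and the Picard rank $2$ constraint.
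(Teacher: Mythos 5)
Your overall architecture matches the paper's: write $(i-1)\rho^\ast H - E = \mu^\ast L$ via the Contraction theorem, show $\mu$ is birational, then use $((i-1)\rho^\ast H-E)^3\cdot D=0$ together with the Picard-rank-$2$ constraint and the irreducibility of the exceptional divisor of a divisorial contraction to conclude that the exceptional locus is exactly $D$. The second half of your argument is essentially what the paper does.

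The gap is in your preferred route to birationality. You propose to compute $\dim|(i-1)\rho^\ast H - E| = 4$ by Riemann--Roch and vanishing and to deduce $\dim X' = 4$ from it. First, the number is wrong: the paper's remark immediately after this lemma computes $\dim|(i-1)\rho^\ast H-E| = 9$ (this system realizes $X'$ as a Mukai fourfold in $\mathbb{P}^{9}$), not $4$. Second, and more seriously, the projective dimension of a linear system bounds the dimension of its image only from above, never from below: a nef system of dimension $\ge 4$ can perfectly well contract $\widetilde Y$ onto a lower-dimensional base (compose a fibration with a Veronese re-embedding of the base), so no count of sections by itself rules out a fibre-type contraction. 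What actually forces $\dim\mu(\widetilde Y)=4$ is that the nef divisor $(i-1)\rho^\ast H - E$ is big, which the paper gets from the top self-intersection $((i-1)\rho^\ast H-E)^4 = 12 > 0$ --- precisely the computation you relegate to a parenthesis. Promote that to the main argument and drop the Riemann--Roch count. A related small slip: non-ampleness of $(i-1)\rho^\ast H-E$ only tells you $\mu$ contracts some curve, not that its exceptional locus has codimension one (a priori the contraction could be small); divisoriality follows once you know $((i-1)\rho^\ast H-E)^3\cdot D=0$ forces the divisor $D$ to be contracted, which you establish only afterwards, so the order of these two steps should be reversed.
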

\begin{proof}
	By the Contraction theorem, we can write $(i-1)\rho^\ast H-E=\mu^\ast L$, where $L$ is the ample generator of ${\mathrm{Pic}}\,(X') \cong \Bbb Z$.  Then we have $((i-1)\rho^\ast H-E)^4 = 
	12$ and $\dim \mu( \widetilde{Y}) = 4$.  
	Thus, $\mu$ is birational. Moreover, $\mu$ is divisorial, hence its exceptional locus coincides with $D$. In particular, it is an irreducible divisor. 
	 \end{proof}
 
	\begin{rem}\label{rem3}

	Using the Riemann-Roch and Kodaira Vanishing Theorems, we obtain the equality $\dim |(i-1)\rho^\ast H-E| = 9$. 
	This yields the diagram
	$$\xymatrix{&\widetilde{Y}\ar[dr]^{\mu}\ar[dl]^{\rho}\ar[r]^{\varphi}&X\ar[d]\\
		Y\ar@{-->}[rr]^\Phi&&X^\prime \subset\Bbb P^{9},}$$
	where $\widetilde{Y} \to X^\prime \subset\Bbb P^{9}$ is given by the linear system $|(i-1)\rho^\ast H-E|$, and $$\varphi_{|(i-1)\rho^\ast H-E|}:\xymatrix{\widetilde{Y} \ar[r]^{\varphi} & X\ar[r]& X^\prime \subset\Bbb P^{9}}$$
	is the Stein factorization.
		\end{rem}
	
\begin{lem} \label{mukai}	The variety $X^\prime$  is a Mukai fourfold with at worst terminal Gorenstein singularities and rank of  ${\mathrm{Pic}}\,(X^\prime)$ is $1$.
\end{lem}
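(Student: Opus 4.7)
The plan is to read off the Mukai-fourfold data of $X'$ from the divisorial Mori contraction $\mu$ established in Lemmas~\ref{fano2}--\ref{birational2}, using contraction-theoretic machinery to transfer properties from $\widetilde{Y}$ to $X'$ and computing the canonical class directly from the divisor relations on $\widetilde{Y}$. First I would invoke the Cone and Contraction theorem: since $\mu:\widetilde{Y}\to X'$ is a birational contraction of a single $K$-negative extremal ray from a Fano fourfold of Picard rank $2$, the target $X'$ is a normal projective variety with $\mathrm{rank}\,\mathrm{Pic}(X')=\rho(\widetilde{Y})-1=1$.

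Next I would pin down the canonical class. From Lemma~\ref{noample2} one has $D\sim (i-2)\rho^{\ast}H-E$, and together with $\mu^{\ast}L=(i-1)\rho^{\ast}H-E$ this yields $\rho^{\ast}H\sim \mu^{\ast}L-D$. Substituting into $-K_{\widetilde{Y}}=\rho^{\ast}H+(i-1)\rho^{\ast}H-E$ gives
\[
-K_{\widetilde{Y}}\,=\,2\mu^{\ast}L-D.
\]
Pushing forward along $\mu$ annihilates the exceptional divisor $D$ and produces $-K_{X'}\sim 2L$ as Weil divisors. Because $L$ is the restriction of $\mathcal{O}_{\Bbb P^9}(1)$ to $X'\subset\Bbb P^9$ it is Cartier, hence so is $-K_{X'}$: $X'$ is Gorenstein and of Fano index exactly $2$. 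The same identity exhibits the discrepancy of $D$ along $\mu$ as $+1$, so the singularities introduced by the contraction are terminal. For the degree, $(\mu^{\ast}L)^4=((i-1)\rho^{\ast}H-E)^4$; since the diagram of Theorem~\ref{invpro} is precisely the inverse of the diagram of Theorem~\ref{pro1}, this coincides with $(\varphi^{\ast}L)^4=12$ from Lemma~\ref{rem1}, and the sectional genus becomes $\pi_{X'}=\frac{1}{2}L^4+1=7$. Therefore $X'$ is a Mukai fourfold of genus $7$ with Picard rank $1$.

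The main obstacle I anticipate is that of terminality: since $F$ is only known to have isolated singularities, the blow-up $\widetilde{Y}=\mathrm{Bl}_{F}Y$ can itself carry singularities over the singular points of $F$, and the resulting singularities on $X'$ away from $\mu(D)$ are not automatically terminal. Resolving this requires either a local analysis at the singular points of $F$, or an appeal to the Andreatta--Wisniewski structure theorem for divisorial Mori contractions of Fano fourfolds (as already used in Lemma~\ref{sing.F}); once this is secured, the discrepancy computation above handles the rest.
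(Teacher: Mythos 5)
Your proposal follows essentially the same route as the paper: Picard rank $1$ from the extremal contraction, the identity $-K_{\widetilde{Y}} = 2\mu^{\ast}L - D$ pushed forward to give $-K_{X^\prime} = 2L$, and terminality from the fact that $\mu$ is a divisorial Mori contraction (which the paper simply asserts, while you make the discrepancy $+1$ explicit). The caveat you raise about $\widetilde{Y}=\mathrm{Bl}_F Y$ possibly being singular over the singular points of $F$ is a legitimate point that the paper's own proof also leaves unaddressed, but it does not alter the fact that your argument coincides with the paper's.
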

	\begin{proof}
		Since $\mu$ is a divisorial Mori contraction, $X^\prime$ has at worst terminal singularities. It follows from ${\mathrm{rank}}\ {\mathrm{Pic}}\, (\widetilde{Y} )= 2$ that we have ${\mathrm{rank}}\ {\mathrm{Pic}}\, (X^\prime) = 1$. Since
		$$-K_{\widetilde{Y}} = i\rho^\ast H  - E=2((i-1)\rho^\ast H -E)-D,$$
		we get that $-K_{X^\prime}=2L$. Hence $-K_{X^\prime}$ is an ample Cartier divisor divisible by $2$ in ${\mathrm{Pic}}\, (X^\prime)$. So $X^\prime$ is a Mukai fourfold.
	\end{proof}
	
	\begin{rem}\label{rem4} \rm
	
The morphism $X\to X^\prime \subset \Bbb P^{9}$ is given by the linear system $|L| = | -\frac{1}{2} K_{X}|$. By \cite[Lemma 2.5]{HHT22}, this is an isomorphism. In the sequel, we identify $X$ with $X^\prime $ and $\varphi_{|(i-1)\rho^\ast H-E|}$ with $\varphi$.
Hence, $\varphi$ is birational, ${\rm d}_X=12$, and the morphism $\varphi$ contracts the divisor $D$ to an irreducible surface $\Sigma \subset X$.

Since ${\mathrm{rank}}\ {\mathrm{Pic}}\, (\widetilde{Y})= 2$, the exceptional locus of $\varphi$ coincides with $D$, and
$D$ is a prime divisor. Therefore, $\varphi$ has at most a finite number of $2$-dimensional fibers. By the Theorem in \cite{AnW98}, $\Sigma$ has at most isolated singularities. Moreover,  the surface $\Sigma$ as in Table \ref{S95} is normal. 

	\end{rem}

\begin{lem}\label{blowup} 
The morphism $\varphi : \widetilde{Y}\to X$ is the blowup of the surface $\Sigma$, where both $\Sigma$ and $X$ are smooth if surface $F$ is general.

\end{lem}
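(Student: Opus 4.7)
The plan is to identify $\varphi$ as a blowup via the Ando--Wisniewski structure theorem for elementary divisorial extremal contractions of a smooth Fano fourfold, and then to handle the smoothness assertion by a genericity argument on the Hilbert scheme of the centers $F$.

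For the first step, recall that by Lemma \ref{fano2} the fourfold $\widetilde{Y}$ is smooth Fano of Picard rank $2$, and by Lemma \ref{birational2} the morphism $\varphi:\widetilde{Y}\to X$ is an elementary divisorial Mori contraction whose exceptional locus coincides with the prime divisor $D$. Remark \ref{rem4} records that $\Sigma=\varphi(D)$ is a surface with at most isolated singularities and that $\varphi$ has only finitely many two-dimensional fibers, so the generic fiber of $\varphi$ over $\Sigma$ is one-dimensional. Applying \cite[Theorem 4.1.3]{AnM03} together with \cite{AnW98} to $\varphi$, one concludes that $\varphi$ is the blowup $\mathrm{Bl}_\Sigma X$ along $\Sigma$ with exceptional divisor $D$, and moreover that $X$ is smooth at every point of $\Sigma$ whose fiber under $\varphi$ is one-dimensional.

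It therefore remains to show that for general $F$ every fiber of $\varphi$ is one-dimensional. I would first identify the possible two-dimensional fibers: since they are contained in the prime divisor $D\sim (i-2)\rho^\ast H-E$ and are contracted by the linear system $|(i-1)\rho^\ast H-E|$, intersection-theoretic considerations (combined with $\rho^\ast H\cdot\ell=1$ for a line $\ell$ in such a fiber) force each such fiber to be the proper transform of a plane $\Pi\subset Y$ lying in the hypersurface $\rho(D)$ and meeting $F$ along a plane curve of a definite degree (a quartic in the cubic-threefold case $Y=\mathbb{P}^4$, and the analogous curves in the other two rows of Table \ref{S95}). A case-by-case dimension count on the incidence variety of pairs $(\Pi,F)$ with $\Pi\cap F$ containing such a curve, carried out for the three Fano varieties $Y=X_{2\cdot 2},\,Q^4,\,\mathbb{P}^4$, should show that the locus of special $F$ is a proper closed subset of the appropriate component of the Hilbert scheme. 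For such a general $F$ every fiber of $\varphi$ is then one-dimensional, and so both $\Sigma$ and $X$ are smooth by the Ando--Wisniewski statement invoked above.

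The main obstacle will be precisely this parameter count. It requires a concrete grip on the relevant component of the Hilbert scheme of surfaces of type $F$ in each $Y$, on the associated family of hypersurfaces $\rho(D)\subset Y$ (which by the argument of Lemma \ref{noample2} is essentially determined by $F$ through $|\mathcal{O}_Y(i-2)\otimes\mathcal{I}_F|$), and on the sub-Hilbert scheme of planes contained in a member of that family. Showing uniformly that no plane in $\rho(D)$ meets a general $F$ along a curve of the required degree is the technical heart of the argument, and will very likely demand separate treatment of the three rows of Table \ref{S95}.
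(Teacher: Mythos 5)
Your proposal follows essentially the same route as the paper: both use the Andreatta--Wi\'sniewski classification of two-dimensional fibers to show that any such fiber of $\varphi$ is (the proper transform of) a plane in $\rho(D)$ meeting $F$ along a curve of degree $i-1$, rule this configuration out for general $F$, and then invoke Ando's theorem to conclude that $\varphi$ is the blowup of a smooth surface $\Sigma$ in a smooth $X$. The only real difference is the genericity step: where you propose a full incidence-variety dimension count over the Hilbert scheme of surfaces $F$, the paper shortcuts this by observing that the unique hypersurface $W$ of degree $i-2$ containing $F$ carries only finitely many planes, so a general $F$ meets none of them in a curve of the required degree.
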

\begin{proof}
Assume that $\Sigma$ or $X$ are singular, the extremal $K_{\widetilde{Y}} $-negative contraction $\varphi : \widetilde{Y} \to X$  would have
a $2$-dimensional fiber, say, $\widetilde{Z}\subset D\subset \widetilde{Y}$. Since $\Sigma$ is normal, by the main theorem and Proposition 4.11 in \cite{AnW98} we have $\widetilde{Z}\cong\Bbb P^2$ and
$$(i\rho^\ast H-E)|_{\widetilde{Z}}=-K_{\widetilde{Y}}|_{\widetilde{Z}}=\mathcal O_{\Bbb P^2}(1).$$
Since $\widetilde{Z}$ is contracted to a point under $\varphi$, we have $((i-1)\rho^\ast H-E)|_{\widetilde{Z}}\sim 0$.
Thus, $\rho^\ast H|_{\widetilde{Z}}=\mathcal O_{\Bbb P^2}(1)$ and $E|_{\widetilde{Z}} =\mathcal O_{\Bbb P^2}(i-1)$. Therefore, the image $Z=\rho (\widetilde{Z})\subset \rho(D)$  is 
a plane meeting $F$ along a curve of degree $i-1$, i.e, $Z\neq \Sigma$ and $Z\cap F\cong \widetilde{Z}\cap E$ is a curve of degree $i-1$ in $Z\cong \Bbb P^2$, which contradicts with a general surface $F$ contain in the unique hypersurface $W$ of degree $i-2$ of $Y$ and $W$ contains finite planes.
Hence,  all the fibers of $\varphi$ have dimension less than or equal to $1$. 
By \cite{And85},  both $X$ and $\Sigma$ are smooth, and $\varphi$ is the blowup of $\Sigma$. 
\end{proof}

Finally, the proof of Theorem \ref{invpro} can be proceed as follows.
	\begin{proof}[Proof of Theorem \ref{invpro}]
	To $(1)$ and $(2)$: By Lemma \ref{fano2}, $\widetilde{Y}$ is a Fano fourfold with rank of ${\rm Pic}(\widetilde{Y})$ is $2$. It follows from Lemma \ref{mori2} that there exists a Mori contraction $\mu: \widetilde{Y} \longrightarrow X^\prime$ different from $\rho.$ By Lemma \ref{noample2}, $(i-1)\rho^\ast H - E$ is not ample, so it yields a supporting linear function of the extremal ray generated by the curves in the fibers of $\rho$.  By Lemma \ref{birational2}, $\mu$ is birational and its exceptional locus coincides with $D.$ It implies from Lemma \ref{mukai} and Remark \ref{rem4} that $X^\prime$ is a Mukai fourfold of genus $7$. The birationality of $\Phi$ and the commutativity of the diagram follows.
	
	To $(3)$: It follows from Lemma \ref{blowup}.
	
	To $(4)$: It implies from Lemma \ref{noample2} that $D \sim (i-2)\rho^\ast H-E.$ From Lemma \ref{birational2} and Remark \ref{rem4}, we get $(i-1)\rho^\ast H-E=\varphi^\ast L$. The two remain relations follows.
\end{proof}

\section{Flexibility of affine cones over Mukai fourfolds of genus $7$}
In this section, we explore the flexibility of the affine cone over a Mukai fourfold of genus $7$. A point $p\in V$ in an affine algebraic variety $V$ is called \textit{flexible} if the tangent space $T_p V$ is spanned by tangent vectors to the orbits of actions of the additive group of the field $\mathbb{G}_a$ on $V$. If every smooth point of $V$ is flexible, we call $V$ a \textit{flexible variety}. One effective way to prove the flexibility of the affine cone over a projective variety is to construct a cylindrical subset, as proposed by Kishimoto, Prokhorov, and Zaidenberg (\cite{KPZ11}). This powerful technique has been used to establish the flexibility of many significant classes of varieties, including the affine cone over a Mukai fourfold of genus $7$. In this section, we use this approach to demonstrate the flexibility of the affine cone over such a fourfold.

\begin{defn} 
\rm
	\begin{enumerate}[$(1)$]
\item   An {\it $\mathbb{A}^n$-cylinder} in $X$ is defined as a pair $(Z, \varphi)$ where $Z$ is a variety, $\mathbb{A}^n$ is the affine $n$-space over $\mathbb{C}$, and $\varphi: Z\times \mathbb{A}^n\to X$ is an open embedding. We define $X$ to be {\it $\mathbb{A}^n$-cylindrical} if there exists an $\mathbb{A}^n$-cylinder $(Z, \varphi)$ in $X$. If $H$ is a divisor on $X$, we say that an $\mathbb{A}^n$-cylinder $(Z, \varphi)$ in $X$ is \emph{$H$-polar} if $\varphi(Z\times\mathbb{A}^n)=X\setminus\textrm{supp }D$ for some effective divisor $D\in|kH|$, where $k>0$. (\cite[Definitions 3.1.5, 3.1.7]{KPZ11}).

		\item 
		A subset $Y\subset X$ is called \emph{invariant} with respect to a $\Bbb A^n$-cylinder $(Z, \varphi)$ in $X$ if $$Y\cap \varphi( Z\times \Bbb A^n)=\varphi(\pi_1^{-1}(\pi_1(\varphi^{-1}(Y)))),$$ where $\pi_1:Z\times \Bbb A^n\to Z$ is the first projection of the direct product (\cite[Definitions 3]{Per13}). 
		\item
		We say that a variety $X$ is {\it transversally covered} by $\Bbb A^n$-cylinders ${\{(Z_i, \varphi_i)\}_{i\in I}}$ in $X$ if 
		$X$ has a  covering 
		\begin{equation}\label{covering}
			X=\bigcup\limits_{i\in I} U_i
			\tag{$*$}
		\end{equation}
		where each $U_i$ is a Zariski open subset in $X$ such that $U_i= \varphi_i(Z_i\times \Bbb A^n)$ and it does not admit any proper invariant subset with respect to  every $\Bbb A^n$-cylinders $(Z_i, \varphi_i)$. 
				 (\cite[Definitions 4]{Per13})
	\end{enumerate}
\end{defn}
In the following, we will mention some results that are useful for our work. For more information and proof, refer to \cite{HT22}.
\begin{lem}[\cite{HT22}]\label{cylinderCremona}
Let $W$ be a singular cubic hypersurface in $\Bbb P^4$.
Then the variety $ \Bbb P^4\setminus W$ is transversally covered by $\Bbb A^2$-cylinders ${\{(Z_{y}, \varphi_{y})\}_{y\in  \Bbb P^4\setminus W }}$ in $ \Bbb P^4\setminus W$.

\end{lem}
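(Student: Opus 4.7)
The plan is to use the singularity of $W$ to realize $\Bbb P^{4}$ birationally as a $\Bbb P^{1}$-bundle over $\Bbb P^{3}$, and then manufacture $\Bbb A^{2}$-cylinders by working in affine charts downstairs.

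First, I would fix a singular point $q\in W$ and study the linear projection $\pi_{q}:\Bbb P^{4}\dashrightarrow \Bbb P^{3}$ from $q$. Since ${\rm mult}_{q}(W)\geq 2$, a general line $\ell$ through $q$ meets $W$ at $q$ with multiplicity at least $2$ together with at most one further residual point, so $\ell\cap(\Bbb P^{4}\setminus W)$ is either $\Bbb A^{1}$ or $\Bbb A^{1}\setminus\{0\}$. After blowing up $q$, this realizes (an open subset of) $\Bbb P^{4}\setminus W$ as an $\Bbb A^{1}$-bundle over a corresponding open subset of $\Bbb P^{3}$; when $W$ is a cone with vertex $q$ the bundle is genuinely an $\Bbb A^{1}$-bundle everywhere over $\Bbb P^{3}$ minus a cubic surface, which is the easier situation.

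Second, for each point $y\in\Bbb P^{4}\setminus W$ I would choose a hyperplane $H\subset\Bbb P^{3}$ avoiding $\pi_{q}(y)$ and missing the bad locus of $\pi_{q}|_{W}$ near $\pi_{q}(y)$. The preimage $\pi_{q}^{-1}(\Bbb P^{3}\setminus H)$ is an affine chart $\Bbb A^{4}\subset\Bbb P^{4}$ containing $y$, within which $W$ cuts out an affine cubic threefold. Using the triviality of the $\Bbb P^{1}$-bundle over the affine base $\Bbb A^{3}=\Bbb P^{3}\setminus H$ together with an $\Bbb A^{1}$-direction inside $\Bbb A^{3}$ that stays disjoint from $\pi_{q}(W)$, I would display an open subset of the form $Z_{y}\times\Bbb A^{2}$ in $\Bbb A^{4}\setminus W$ containing $y$, giving the cylinder $(Z_{y},\varphi_{y})$.

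Third, for transversality I would vary $y$ (and the auxiliary data $H$) and argue that no proper subset of $\Bbb P^{4}\setminus W$ is simultaneously invariant under all $(Z_{y},\varphi_{y})$: the two-dimensional translation directions in the $\Bbb A^{2}$-factors change with $y$, so the corresponding $\Bbb G_{a}^{2}$-actions collectively move every point of $\Bbb P^{4}\setminus W$ to every other point.

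The main obstacle I anticipate is isolating the honest $\Bbb A^{2}$-factor inside $\Bbb A^{4}\setminus W$. Removing a cubic threefold from $\Bbb A^{4}$ need not leave a cylindrical open set, so the crux is to choose the affine chart and the two $\Bbb A^{1}$-directions carefully so that an entire $\Bbb A^{2}$ truly misses the cubic; this will probably require a case analysis on the type of singularity of $W$ at $q$ (ordinary node, higher double point, cone vertex) and on the position of $y$ relative to the projected image $\pi_{q}(W)\subset\Bbb P^{3}$.
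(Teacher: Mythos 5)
First, a remark on scope: the paper does not prove this lemma at all --- it is imported verbatim from \cite{HT22}, and the reader is referred there for the proof --- so your proposal has to be measured against the construction in that reference. Against that standard, your first step contains a fatal gap. Write $W=\{x_0q_2(x')+q_3(x')=0\}$ with $q=(1:0:\cdots:0)$ the double point and $x'=(x_1,\dots,x_4)$. For a line $\ell$ through $q$ in the direction $[x']$ and not contained in $W$, the scheme $\ell\cap W$ is $2q+R$, and the residual point $R$ coincides with $q$ precisely when $q_2(x')=0$. Hence $\pi_q$ restricted to $\mathbb P^4\setminus W$ has fibre $\mathbb C^{*}$ over every point of the \emph{open} set $\{q_2\neq0\}\subset\mathbb P^3$, and fibre $\mathbb A^1$ only over the surface $\{q_2=0\}\setminus\{q_3=0\}$. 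The union of the $\mathbb A^1$-fibres is therefore a locally closed threefold, not an open subset of $\mathbb P^4\setminus W$, so no open subset of $\mathbb P^4\setminus W$ is an $\mathbb A^1$-bundle over an open subset of $\mathbb P^3$ via $\pi_q$ (blowing up $q$ changes nothing here, since $q\in W$ and hence $q$ is not a point of the open set under study). Your second step is also unavailable as stated: $\pi_q|_{W}$ is birational onto $\mathbb P^3$, so $\pi_q(W)$ is dense and there is no $\mathbb A^1$-direction in $\mathbb P^3\setminus H$ disjoint from it. Consequently the fibre direction can only ever contribute a $\mathbb C^{*}$, and the construction cannot produce an honest $\mathbb A^2$-factor --- which is exactly the obstacle you name in your final paragraph and do not overcome.

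The singular point has to be exploited differently, namely through the de Jonqui\`eres (Cremona) transformation $\tau:(x_0:x')\mapsto(x_0q_2(x')+q_3(x'):q_2(x')\,x')$, which acts on each line through $q$ with $q_2(x')\neq 0$ as the translation $x_0\mapsto x_0+q_3(x')/q_2(x')$. It restricts to an automorphism of $\mathbb P^4\setminus A$, where $A=\{q_2(x')=0\}$ is the cone over the tangent cone of $W$ at $q$, and it carries $W\setminus A$ onto $\{x_0=0\}\setminus A$; hence $\mathbb P^4\setminus(W\cup A)\cong\mathbb A^4\setminus\{q_2=0\}$, where $\mathbb A^2$-cylinders such as $(\mathbb C^{*})^{2}\times\mathbb A^2$ become visible after splitting a hyperbolic plane off $q_2$. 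This is the mechanism behind the cited proof (whence the label ``Cremona''); one must still vary the auxiliary data to cover the points of $A\setminus W$ and verify the no-proper-invariant-subset condition of transversality, neither of which is addressed by your sketch beyond the (insufficient) observation that the cylinder directions vary with $y$.
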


\begin{lem}\label{openCover}
For each point $x\in X$, there exists an open set $U_x$ of $X$ such that 
\begin{enumerate}[$(1)$]
\item $U_x=X\backslash C_x$, where $C_x$ is a cubic hypersurface section of $X$.
\item There exists  a singular cubic threefold $W_x\subset\Bbb P^4$,  and a birational map $\Phi_x:X \dashrightarrow  \Bbb P^{4}$ such that  $$X\backslash C_x\cong \Bbb P^4\backslash W_x.$$
\end{enumerate}
\end{lem}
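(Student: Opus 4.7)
The plan is to specialize Theorem~\ref{pro1} to the cubic scroll row of Table~\ref{S95}, in which $Y=\Bbb P^4$ and $d_\Sigma=3$. Given any cubic scroll surface $\Sigma\subset X$, part~(6) of that theorem produces a cubic hypersurface section $\varphi(E)\subset X$ containing $\Sigma$, and part~(7) yields the open isomorphism
\begin{equation*}
X\backslash\varphi(E)\;\cong\;\Bbb P^4\backslash\rho(D),
\end{equation*}
in which $\rho(D)$ is a hypersurface of degree $d_\Sigma=3$ in $\Bbb P^4$, i.e.\ a cubic threefold. Theorem~\ref{invpro}(3) moreover asserts that $\rho(D)$ is singular. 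Setting $C_x:=\varphi(E)$, $W_x:=\rho(D)$, and $U_x:=X\backslash C_x$, items~(1) and~(2) of the lemma will follow the moment we can exhibit, for the given $x$, a cubic scroll $\Sigma\subset X$ with $x\notin\varphi(E)$.

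The task therefore reduces to showing that for each $x\in X$ there exists a cubic scroll $\Sigma\subset X$ whose associated cubic section $\varphi(E)$ does not pass through $x$. To this end I would first prove that cubic scrolls on $X$ form a positive-dimensional family $\mathcal{F}$. Two complementary approaches are available: intrinsically, via Mukai's description of a smooth Fano-Mukai fourfold of genus $7$ as a transverse linear section of the $10$-dimensional spinor variety $OG(5,10)\subset\Bbb P^{15}$, whose abundant families of projective subspaces restrict to cubic scrolls on $X$; or extrinsically, by running the construction of Theorem~\ref{invpro} in reverse and deforming the degree-$7$, sectional-genus-$3$ surface $F\subset\Bbb P^4$ in its Hilbert scheme, which upon identifying the resulting Mukai fourfolds with our fixed $X$ yields the desired family $\mathcal{F}$ of cubic scrolls on $X$.

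With $\mathcal{F}$ in hand, a general-position argument completes the proof. As $\Sigma$ ranges over $\mathcal{F}$, the associated cubic section $\varphi(E)$ varies non-trivially, for otherwise all members of $\mathcal{F}$ would be confined to a single $3$-dimensional cubic section of $X$, contradicting the positive-dimensionality and sweeping property of $\mathcal{F}$. Thus $\{\varphi(E)\}_{\Sigma\in\mathcal{F}}$ is a genuinely moving linear subsystem of $|3L|$; a further movability argument then shows that its base locus is empty, so every $x\in X$ escapes at least one member of the family, producing the required scroll. The principal obstacle is precisely this last step: verifying that $\mathcal{F}$ is positive-dimensional and that $\{\varphi(E)\}_{\Sigma\in\mathcal{F}}$ is base-point free on $X$. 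This is where the Mukai-Iliev geometry of the genus-$7$ fourfold, or equivalently a careful Hilbert-scheme / deformation-theoretic analysis of cubic scrolls in $X$, will do the decisive work.
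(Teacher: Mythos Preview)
Your strategy diverges substantially from the paper's, and the divergence is exactly at the point you yourself flag as ``the principal obstacle''. The paper does \emph{not} try to show that cubic scrolls alone suffice. Instead it runs a case analysis on the tangent section $T_xX\cap X$ at the given point $x$, which by \cite{LM03,Kuz18} is either a set of five lines, or contains a plane, a quadric surface, or a cubic scroll. Each case is handled separately: the line case is already in \cite{HT22}; in the plane case one composes the map of Theorem~\ref{pro1} (to $X_{2\cdot2}\subset\Bbb P^6$) with \cite[Theorem~3.1]{PrZ16} (from $X_{2\cdot2}$ to $\Bbb P^4$), obtaining $C_x$ as a union hyperplane$\,\cup\,$quadric; the quadric case similarly factors through $Q^4\subset\Bbb P^5$; and only in the cubic scroll case does Theorem~\ref{pro1} go directly to $\Bbb P^4$. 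Note also that in the paper's construction the surface $\Sigma$ lies in $T_xX$, so one actually has $x\in C_x$; the covering property in Theorem~\ref{general} comes from $\bigcap_{x}C_x=\emptyset$, not from $x\in U_x$.

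Your approach asks for much more than the paper establishes, and there are genuine gaps. First, it is not shown anywhere that an arbitrary smooth Fano--Mukai fourfold of genus $7$ contains a cubic scroll surface at all; the case split above says only that the tangent section at $x$ falls into one of those four types, and for the general $X$ (handled in \cite{HT22}) the tangent section at every point is five lines. Second, your proposed extrinsic construction of the family $\mathcal{F}$---deforming $F\subset\Bbb P^4$ and ``identifying the resulting Mukai fourfolds with our fixed $X$''---does not work as stated: Theorem~\ref{invpro} starting from nearby $F$'s will in general produce \emph{non-isomorphic} fourfolds $X$, since the moduli of genus-$7$ Mukai fourfolds is positive-dimensional, so this does not manufacture scrolls on a single fixed $X$. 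Third, even granting a positive-dimensional $\mathcal F$, the passage from ``$\{\varphi(E)\}$ moves'' to ``its base locus is empty'' needs an actual argument; a moving cubic system on a fourfold can easily have a nonempty base locus. In short, the paper's case-by-case route avoids all of these issues by never needing global existence or abundance of any one surface type.
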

\begin{proof}
To prove our result, we analyze the intersection of the Fano-Mukai fourfold $X$ of genus $7$ with its tangent space and tangent cone at each point. 
For every point $x\in X$, the intersection can take one of four forms, as shown in \cite{LM03} and \cite{Kuz18}: a set of lines, or containing a plane, quadratic surface, or cubic scroll surface.

The first case, where the intersection is a set of lines, has been previously studied in \cite{HT22}.

In the second case, where the intersection contains a plane, we can establish the result using Theorem \ref{pro1} and \cite[Theorem 3.1]{PrZ16}. By Proposition \ref{pro1}, we can find a hyperplane $H_x$ in $\mathbb{P}^9$ and a hyperplane $H'_x$ in $\mathbb{P}^6$ such that $X\setminus H_x$ is isomorphic to $X_{2\cdot 2}\setminus H'_x$. Then, using \cite[Theorem 3.1]{PrZ16}, we can show that the complement of a quadric hypersurface section $Q$ of a complete intersection of two quadrics $X_{2\cdot 2}\subset \Bbb P^6$ is isomorphic to the complement of a quadric hypersurface $Q'_x$ in $\Bbb P^4$. Hence, there exists a birational map $\Phi_x:X \dashrightarrow \Bbb P^{4}$ such that $X\backslash C_x\cong \Bbb P^4\backslash W_x$, where the hypersurface $C_x$ is the union of a hyperplane $H_x$ and a quadric hypersurface $Q_x$, while the hypersurface $W_x$ is the union of a hyperplane $H_x'$ and a quadric hypersurface $Q_x'$.

In the third case, where the intersection contains a quadric surface, we can use Proposition \ref{pro1} to find a quadric hypersurface section $Q_x$ of $X$ and a quadric hypersurface section $Q'_x$ of a quadric hypersurface $Y\subset \mathbb{P}^5$ such that the complement of $Q_x$ in $X$ is isomorphic to the complement of $Q'_x$ in $Y$. Then, we can observe that the complement of a hyperplane section $H$ of $Y\subset \Bbb P^5$ is isomorphic to the complement of a hyperplane $H'_x$ in $\Bbb P^4$. As a result, we obtain a birational map $\Phi_x:X \dashrightarrow \Bbb P^{4}$ such that $X\backslash C_x\cong \Bbb P^4\backslash W_x$, where the hypersurface $C_x$ is the union of a quadric hypersurface $Q_x$ and a hyperplane $H_x$, while the hypersurface $W_x$ is the union of a quadric hypersurface $Q_x'$ and a hyperplane $H_x'$.

Finally, for the case where the intersection contains a cubic scroll surface, we can use Theorem \ref{pro1} to  prove our result.
\end{proof}

Let us remind ourselves of the criteria for determining flexibility in affine cones
\begin{thm}[\cite{Per13}]\label{flexibilityCriterionPer}
If for some very ample divisor $H$ on a smooth projective variety $X$ there exists a transversal covering by $H$-polar $\Bbb A^1$-cylinders,
then the affine cone over $X$ is flexible. 
\end{thm}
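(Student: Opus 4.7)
The plan is to translate the geometric covering data on $X$ into a family of $\Bbb G_a$-actions on the affine cone $V=\mathrm{Spec}\,A$, with $A=\bigoplus_{m\ge 0} H^0(X,\mathcal{O}_X(mH))$, and then to invoke the general criterion of Arzhantsev--Flenner--Kaliman--Kutzschebauch--Zaidenberg \cite{AFKKZ13}, which says that a normal affine variety is flexible if and only if its special automorphism group acts transitively on the smooth locus. Since $H$ is very ample, $V$ is a normal affine variety with an isolated singularity at the apex, so this criterion applies.

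The first step is to show that each $H$-polar $\Bbb A^1$-cylinder $(Z,\varphi)$ on $X$ induces a nontrivial $\Bbb G_a$-action on $V$. Given $\varphi(Z\times \Bbb A^1)=X\setminus\mathrm{supp}\,D$ with $D\in|kH|$ defined by a section $s\in H^0(X,\mathcal{O}_X(kH))$, translation on the second factor of $Z\times \Bbb A^1$ induces a $\Bbb G_a$-action on the open set $U=X\setminus\mathrm{supp}\,D$, hence a derivation $\partial$ of the localized algebra $A[s^{-1}]$. After multiplying by a sufficiently large power of $s$, this derivation restricts to a homogeneous locally nilpotent derivation of $A$, i.e., a nontrivial $\Bbb G_a$-action on $V$ that fixes the zero locus of $s$ pointwise.

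The second step is to check that the transversality hypothesis rules out proper common invariant subvarieties. A closed $\Bbb G_a$-invariant subvariety $Y\subset V$ projects, outside the fixed locus, to a subvariety of $U$ saturated under the cylinder projection $U\to Z$ --- precisely the invariance condition used in the definition of a transversal covering. Hence transversality of the covering $X=\bigcup U_i$ forces the collection of associated $\Bbb G_a$-actions on $V$ to admit no common proper invariant subvariety. The subgroup of $\mathrm{Aut}(V)$ generated by these unipotent one-parameter subgroups therefore acts on the smooth locus of $V$ with no proper invariant subvariety, and by the structural results of \cite{AFKKZ13} this upgrades to genuine transitivity, giving flexibility.

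The main technical obstacle lies in the first step: verifying that $s^N\partial$ is not only an honest derivation of the graded ring $A$, but is also \emph{locally nilpotent} and \emph{homogeneous} of a fixed degree. This is where the polarity assumption $D\in|kH|$ is indispensable --- the grading on $A$ is by powers of sections of $H$, so a divisor linearly equivalent to a multiple of $H$ provides exactly the compatibility needed to lift the cylinder's translation vector field to a graded derivation of $A$. Once this lifting is in place, the passage from transversality to transitivity, and from transitivity to flexibility, is formal.
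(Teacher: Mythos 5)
The paper does not actually prove this statement: it is quoted verbatim from Perepechko's article \cite{Per13} (his Theorem~5), so there is no internal proof to compare against. Your reconstruction follows the same strategy as that source --- the Kishimoto--Prokhorov--Zaidenberg dictionary turning an $H$-polar cylinder into a homogeneous locally nilpotent derivation of the section ring $A$, followed by the Arzhantsev--Flenner--Kaliman--Kutzschebauch--Zaidenberg characterization of flexibility via transitivity of $\mathrm{SAut}$ on the smooth locus --- and both pillars are correctly identified. In the first step, the point you gloss over is made precise as follows: extending $\partial/\partial t$ from $(A[s^{-1}])_0=\mathcal{O}(U)$ to the whole $\mathbb{Z}$-graded ring amounts to lifting the translation action on $U\cong Z\times\mathbb{A}^1$ to the total space of $\mathcal{O}_X(-H)|_U$, which is possible because $\operatorname{Pic}(Z\times\mathbb{A}^1)\cong\operatorname{Pic}(Z)$, so the polarization restricted to the cylinder is pulled back from $Z$ and the action lifts canonically; then $s^N\partial$ is homogeneous of degree $Nk$ and locally nilpotent since $s\in\ker\partial$.

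The genuine gap is in your second step, at the passage from ``no common proper invariant subvariety'' to transitivity. Transversality of the covering is a condition on subsets of $X$, so it only excludes $G$-invariant subsets of $V\setminus\{0\}$ whose image in $X$ is a \emph{proper} subset. It does not, as stated, exclude a proper closed $G$-invariant hypersurface of $V\setminus\{0\}$ that dominates $X$ (for instance a multisection of the $\mathbb{C}^*$-fibration $V\setminus\{0\}\to X$): note that each constructed $\mathbb{G}_a$-orbit is a \emph{section} over an $\mathbb{A}^1$-fiber on which $s_i$ is constant, so a priori all these orbits could lie on such a hypersurface. Closing this gap is the real content of the theorem: one uses that the derivations are homogeneous of nonzero degree, so the generated group is normalized by the scaling $\mathbb{C}^*$-action on the cone, together with the local closedness of orbits of algebraically generated groups from \cite{AFKKZ13}, to show that an orbit dominating $X$ must in fact be open, whence the complement of the open orbit has proper, cylinder-invariant image in $X$ and transversality applies. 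With that supplement your argument is the standard proof; without it, the final implication does not follow formally.
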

\begin{lem}[\cite{HT22}]\label{flexibilityCriterion} If $X$ is {\it transversally covered} by $\Bbb A^2$-cylinders, then $X$ is {\it transversally covered} by $\Bbb A^1$-cylinders.
In particular,  if for some very ample divisor $H$ on a smooth projective variety $X$ there exists a transversal covering by $H$-polar $\Bbb A^2$-cylinders,
then the affine cone over $X$ is flexible. 

\end{lem}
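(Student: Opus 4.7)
The plan is to refine each $\Bbb A^2$-cylinder in the given covering into two $\Bbb A^1$-cylinders with the same open image, using the trivial coordinate splitting $\Bbb A^2 \cong \Bbb A^1_x \times \Bbb A^1_y$. Explicitly, for each cylinder $(Z_i,\varphi_i)$ with open image $U_i \cong Z_i \times \Bbb A^1_x \times \Bbb A^1_y \subset X$, I would form the two $\Bbb A^1$-cylinders $(Z_i\times\Bbb A^1_x, \psi_{i,1})$ and $(Z_i\times\Bbb A^1_y, \psi_{i,2})$ whose projections correspond to forgetting the $y$- and $x$-coordinates respectively. The new collection $\{(Z_i\times\Bbb A^1_x,\psi_{i,1}),(Z_i\times\Bbb A^1_y,\psi_{i,2})\}_{i\in I}$ is again an open covering of $X$, since each pair shares the image $U_i$ with the parent cylinder, and it inherits $H$-polarity for the same reason: the complement $X\setminus U_i$ is unchanged.

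The step that carries the actual content is preserving transversality. Suppose $Y\subset X$ is invariant with respect to every member of the new family. Fix $i$ and work inside $U_i\cong Z_i\times\Bbb A^1_x\times\Bbb A^1_y$. Invariance with respect to $(Z_i\times\Bbb A^1_x,\psi_{i,1})$ says that $Y\cap U_i$ is a union of $\Bbb A^1_y$-fibers, i.e.\ stable under translations in the $y$-direction; similarly invariance with respect to $(Z_i\times\Bbb A^1_y,\psi_{i,2})$ gives stability in the $x$-direction. Since translations in $x$ and $y$ generate the full additive group of $\Bbb A^2$, it follows that $Y\cap U_i = A_i\times\Bbb A^2$ for some $A_i\subset Z_i$, which is exactly the statement that $Y$ is invariant with respect to the original $\Bbb A^2$-cylinder $(Z_i,\varphi_i)$. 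Doing this for each $i$ and then invoking the assumed transversality of the $\Bbb A^2$-covering forces $Y$ to fail to be a proper subset, proving the first assertion.

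For the "in particular" clause, the derived $\Bbb A^1$-cylinders form a transversal $H$-polar covering by the first part, so Perepechko's criterion (Theorem \ref{flexibilityCriterionPer}) applies and yields flexibility of $\mathrm{Affcone}_H(X)$. I do not anticipate a genuine obstacle here: the concern would be that transversality might weaken on passing from $\Bbb A^2$- to $\Bbb A^1$-cylinders, because invariance under a single derived $\Bbb A^1$-cylinder is strictly weaker than invariance under its $\Bbb A^2$-parent; but pairing the two transverse $\Bbb A^1$-substructures of each $\Bbb A^2$-cylinder is precisely what restores the equivalence and keeps the transversality condition intact.
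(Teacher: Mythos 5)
Your proposal is correct: splitting each $\Bbb A^2$-cylinder into the two coordinate $\Bbb A^1$-cylinders with the same image $U_i$, and observing that simultaneous invariance under both forces invariance under the parent $\Bbb A^2$-cylinder (translations in $x$ and $y$ together sweep out the whole fiber), is exactly the argument behind this lemma in \cite{HT22}, which the present paper cites without reproving. The $H$-polarity and covering statements carry over unchanged since the complements $X\setminus U_i$ are untouched, so Theorem \ref{flexibilityCriterionPer} applies as you say.
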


\begin{thm}\label{general}
The affine cones over Fano-Mukai fourfold of genus  $7$ are flexible.
\end{thm}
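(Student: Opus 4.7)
The plan is to exhibit a transversal covering of $X$ by $L$-polar $\mathbb{A}^2$-cylinders and then invoke Lemma \ref{flexibilityCriterion}. The construction of the cylinders is essentially read off from Lemma \ref{openCover} together with Lemma \ref{cylinderCremona}; the genuine work lies in the transversality check.

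First, for each point $x \in X$, Lemma \ref{openCover} produces an open set $U_x = X \setminus C_x$, where $C_x$ is a cubic hypersurface section of $X$, together with an isomorphism $U_x \cong \mathbb{P}^4 \setminus W_x$ for some singular cubic threefold $W_x \subset \mathbb{P}^4$. By Lemma \ref{cylinderCremona}, $\mathbb{P}^4 \setminus W_x$ is transversally covered by $\mathbb{A}^2$-cylinders $\{(Z_{x,y}, \varphi_{x,y})\}_{y}$. Transporting this collection through the isomorphism yields $\mathbb{A}^2$-cylinders $(Z_{x,y}, \tilde\varphi_{x,y})$ in $X$ whose images together fill $U_x$. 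Since $C_x$ is cut out on $X$ by a hypersurface of degree $3$, each of these cylinders is $L$-polar, where $L$ is the hyperplane class generating $\mathrm{Pic}(X)$.

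Next I would show that the family $\{U_x\}_{x \in X}$ covers $X$, which is immediate because $x \in U_x$ for every $x$. Collecting the cylinders of all $U_x$ gives a family $\mathcal F = \{(Z_{x,y}, \tilde\varphi_{x,y})\}_{x \in X,\, y}$ of $L$-polar $\mathbb{A}^2$-cylinders whose open sets cover $X$.

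The main obstacle is to verify that $\mathcal F$ is a transversal covering in the sense of Perepechko, i.e.\ that no proper closed subset $Y \subsetneq X$ is invariant with respect to every cylinder in $\mathcal F$. The key idea is that inside each $U_x$ the induced family $\{(Z_{x,y}, \varphi_{x,y})\}_y$ is already transversal on $U_x$ by Lemma \ref{cylinderCremona}; hence any invariant subset $Y$ must satisfy $Y \cap U_x \in \{\emptyset, U_x\}$ for every $x$. If $Y \neq \emptyset$, pick $x_0 \in Y$; then $x_0 \in U_{x_0}$ forces $Y \supseteq U_{x_0}$, so $Y$ is dense in $X$, hence $Y = X$. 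This reduces transversality of $\mathcal F$ on $X$ to the already known transversality on each piece $U_x$, which is exactly the situation handled by Lemma \ref{cylinderCremona}.

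Finally, having established that $X$ admits a transversal covering by $L$-polar $\mathbb{A}^2$-cylinders and that $L$ is very ample (being the hyperplane class of the embedding $X \subset \mathbb{P}^9$), Lemma \ref{flexibilityCriterion} yields flexibility of the affine cone $\mathrm{Affcone}_L(X)$. Since $\mathrm{Pic}(X) \cong \mathbb{Z} L$, every ample divisor on $X$ is a positive multiple of $L$, and the flexibility of the corresponding affine cones follows from standard reductions (flexibility of $\mathrm{Affcone}_H(X)$ for $H = mL$ is inherited from that of $\mathrm{Affcone}_L(X)$ through the transversal covering by $H$-polar $\mathbb{A}^2$-cylinders obtained by taking $m$-th powers of the defining sections of $C_x$). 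This completes the argument.
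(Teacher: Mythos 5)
Your proposal follows essentially the same route as the paper: produce the open sets $U_x = X\setminus C_x$ via Lemma \ref{openCover}, transport the transversal $\mathbb{A}^2$-cylinder covering of $\mathbb{P}^4\setminus W_x$ from Lemma \ref{cylinderCremona}, check that the resulting global family is a transversal covering, and conclude with Lemma \ref{flexibilityCriterion} and Theorem \ref{flexibilityCriterionPer}. Your spelled-out gluing/transversality argument (any invariant $Y$ meets each $U_x$ in $\emptyset$ or all of $U_x$, and the $U_x$ are dense opens of an irreducible variety, forcing $Y=\emptyset$ or $Y=X$) is actually more explicit than the paper's one-line ``we can glue these coverings together,'' and the closing remark on polarizations $H=mL$ is a reasonable addition.

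One concrete slip: you assert twice that $x\in U_x$. In the paper's construction the cubic section $C_x$ is built from the intersection of $X$ with its tangent space at $x$, so $x\in C_x$ and hence $x\notin U_x$; the paper instead derives the covering property from $\bigcap_{x\in X}C_x=\emptyset$ (for any point $p$ there is some $x'$ with $p\notin C_{x'}$). Both the covering step and the step ``pick $x_0\in Y$; then $x_0\in U_{x_0}$'' must be repaired accordingly, which is routine: choose $x'$ with $x_0\in U_{x'}$ and run the same argument. Also note that ``$Y$ is dense, hence $Y=X$'' needs the extra observation that $Y\supseteq U_{x'}$ for \emph{every} $x'$ (since any two of the dense opens intersect), because invariant subsets are not assumed closed.
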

\begin{proof}
Let $X$ be a Fano-Mukai fourfold of genus  $7$. Since $x\in C_x$ for all $x\in X$ and$\bigcap\limits_{x\in X}C_x=\emptyset$ the family $\{U_{x}\mid x\in X \}$ is an open covering of $X$. Therefore the variety $X$ is   transversally covered by $\Bbb A^2$-cylinders using Lemmas \ref{cylinderCremona} and \ref{openCover}. Specifically, for each point $x\in X$, we have an open set $U_x$ such that $U_x=X\backslash C_x$, where $C_x$ is a cubic hypersurface section of $X$, and a singular cubic threefold $W_x\subset\Bbb P^4$, and a birational map $\Phi_x:X \dashrightarrow \Bbb P^{4}$ such that $X\backslash C_x\cong \Bbb P^4\backslash W_x$. Lemma \ref{cylinderCremona} tells us that $ \Bbb P^4\setminus W_x$ is transversally covered by $\Bbb A^2$-cylinders, and so we can use the birational map $\Phi_x$ to pull back this covering to $U_x\subset X$. Finally, we can glue these coverings together to obtain a global  transversally $H$-polar $\Bbb A^2$-cylinder cover of $X$. 
By Lemma \ref{flexibilityCriterion}, we know that if a smooth projective variety $X$ is transversally covered by $H$-polar $\Bbb A^2$-cylinders, then $X$ is also transversally covered by $H$-polar $\Bbb A^1$-cylinders. Then by Theorem \ref{flexibilityCriterionPer}, the affine cone over $X$ is flexible.
\end{proof}

\begin{bibdiv}
	\begin{biblist}
		\bib{And85}{article}{
      author={Ando, T.},
       title={On extremal rays of the higher-dimensional varieties},
        date={1985},
     journal={Invent. Math.},
      volume={81},
      number={2},
       pages={347\ndash 357},
}

		\bib{AFKKZ13}{article}{
			author={Arzhantsev, I.},
			author={Flenner, H.},
			author={Kaliman, S.},
			author={Kutzschebauch, F.},
			author={Zaidenberg, M.},
			title={Flexible varieties and automorphism groups},
			date={2013},
			journal={Duke Math. J.},
			volume={162},
			pages={767\ndash 823},
		}
	\bib{AZK12}{article}{
		author={Arzhantsev, I.},
		author={Zaidenberg, M.},
		author={Kuyumzhiyan, K.},
		title={{Flag varieties, toric varieties, and suspessions: three examples
				of infinite transitivity}},
		date={2012},
		journal={Mat. Sb.},
		volume={203},
		pages={3\ndash 30},
	}
	
\bib{AnW98}{article}{
	author={Andreatta, M.},
	author={Wi\'{s}niewski, J.A.},
	title={On contractions of smooth varieties},
	date={1998},
	journal={J. Algebraic Geom.},
	volume={7},
	number={2},
	pages={253\ndash 312},
}

\bib{AnM03}{article}{
	author={Andreatta, Marco},
	author={Mella, Massimiliano},
	title={Morphisms of projective varieties from the viewpoint of minimal
		model theory},
	date={2003},
	journal={Dissertationes Mathematicae},
	volume={413},
	pages={1\ndash 72},
}

\bib{Arz23}{unpublished}{
	author={Arzhantsev, I.},
	title={Automorphisms of algebraic varieties and infinite transitivity},
	date={2023},
	note={preprint: \url{https://arxiv.org/pdf/2212.13616.pdf}},
}

%
%
%

\bib{HHT22}{article}{
	author={Hang, N. T.~A.},
	author={Hoff, M.},
	author={Truong, H.L.},
	title={On cylindrical smooth rational fano fourfolds},
	date={2022},
	journal={J. Korean Math. Soc.},
	volume={59},
	pages={87\ndash 103},
}

\bib{HT22}{article}{
	author={Hoff, M.},
	author={Truong, H. L.},
	date = {2022},
	title={ Flexibility of affine cones over Mukai fourfolds of genus $g \geq 7$},
	journal ={https://doi.org/10.48550/arXiv.2208.09109,}
	
}

%

%

\bib{KPZ11}{book}{
	author={Kishimoto, T.},
	author={Prokhorov, Yu.},
	author={Zaidenberg, M.},
	editor={Daigle, D.},
	editor={Ganong, R.},
	editor={Koras, M.},
	title={Group actions on affine cones},
	series={Affine Algebraic Geometry CRM Proceedings and Lecture Notes},
	publisher={American Mathematical Society},
	address={Providence},
	date={2011},
	volume={54},
}

\bib{Kuz18}{article}{
	author={{Kuznesov}, A.},
	title={{On linear section of the spinor tenfold, I}},
	date={2018},
	ISSN={0025-5521; 1903-1807/e},
	journal={{Izv. Math.}},
	volume={82},
	number={694},
}

\bib{LM03}{article}{
	author={{Landsberg}, Joseph~M.},
	author={{Manivel}, Laurent},
	title={{On the projective geometry of rational homogeneous varieties}},
	language={English},
	date={2003},
	ISSN={0010-2571; 1420-8946/e},
	journal={{Comment. Math. Helv.}},
	volume={78},
	number={1},
	pages={65\ndash 100},
}


\bib{Mat02}{book}{
	author={Matsuki, K. },
	title={Introduction to 	the Mori Program},
	date={2002},
	publisher={ Springer-Verlag New York},
}
%
%
%
	\bib{MPS18}{article}{
		author={Michalek, M.},
		author={Perepechko, A.},
		author={S\"{u}ss, H.},
		title={Flexible affine cones and flexible coverings},
		date={2018},
		journal={Math. Z.},
		volume={290},
		pages={1457\ndash 1478},
	}
	
	\bib{Per13}{article}{
		author={Perepechko, A. Yu.},
		title={Flexibility of affine cones over del Pezzo surfaces of degree 4 and 5,},
		date={2013},
		journal={Funct. Anal. Appl.},
		volume={47},
		number={4},
		pages={284\ndash 289},
	}

\bib{PaW16}{article}{
	author={Park, J.},
	author={Won, J.},
	title={Flexible affine cones over del Pezzo surfaces of degree 4,},
	date={2016},
	journal={Eur. J. Math.},
	volume={2},
	number={1}
	pages={304\ndash 318},
}



\bib{PrZ16}{article}{
	author={Prokhorov, Y.},
	author={Zaidenberg, M.},
	title={Examples of cylindrical {F}ano fourfolds},
	date={2016},
	journal={Eur. J. Math.},
	volume={2},
	pages={262\ndash 282},
}
%

\bib{PrZ20}{unpublished}{
	author={Prokhorov, Y.},
	author={Zaidenberg, M.},
	title={Affine cones over {F}ano-{M}ukai fourfolds of genus 10 are
		flexible},
	date={2020},
	note={preprint: \url{https://arxiv.org/abs/2005.12092}},
}

%
%
	
\end{biblist}
\end{bibdiv}

\end{document}